\title{Compositions of Bely\u{i} Maps and their Extended Monodromy Groups}
\author{Jacob Bond}
\address{General Motors Company}
\email{jacob.bond@gm.com}
\algrenewcommand{\Function}[2]{\State{{\bf function} \textproc{#1}(#2)}}
\algrenewcommand{\algorithmiccomment}[1]{\hspace{\algorithmicindent}\(\triangleright\) #1}
\pgfplotsset{compat=1.14}
\definecolor{slayter}{RGB}{110, 153, 180}
\newtheorem*{defn}{Definition}
\newtheorem{thm}{Theorem}
\numberwithin{thm}{section}
\newtheorem{prop}[thm]{Proposition}
\newtheorem{lem}[thm]{Lemma}
\newtheorem{conv}[thm]{Convention}
\newtheorem{const}[thm]{Construction}
\newtheorem{cor}[thm]{Corollary}
\newtheorem{defnnum}[thm]{Definition}
\newtheorem{desc}[thm]{Description}
\newenvironment{example}[1][]{\refstepcounter{thm}\par%\medskip
   \noindent\textbf{Example~\thethm. #1} \rmfamily}{}%\medskip}
\DeclareMathOperator{\Mon}{Mon}
\DeclareMathOperator{\EMon}{EMon}
\DeclareMathOperator{\Fun}{Fun}
\DeclareMathOperator{\id}{id}
\DeclareMathOperator{\sgn}{sgn}
\DeclareMathOperator{\re}{Re}
\DeclareMathOperator{\proj}{proj}
\DeclareMathOperator{\MonExt}{MonExt}
\newcommand{\coloneqq}{\mathrel{\rlap{%
                     \raisebox{0.5ex}{$\cdot$}}%
                     \raisebox{-0.5ex}{$\cdot$}}%
                     =}
\begin{document}

\begin{abstract}
Given a composition of Bely\u{\i} maps \(\beta \circ \gamma: X \rightarrow Z\), paths between edges of \(\beta\) are extended to form loops, then lifted by \(\gamma\).  These liftings are then studied to understand how loops in \(Z\) act on edges of \(\beta \circ \gamma\), demonstrating the group operation in \(\Mon \beta \circ \gamma \unlhd \Mon \gamma \wr \Mon \beta\).  Abstracting away the specific Bely\u{\i} map \(\gamma\) and finding the image of \(\pi_{1}(Z)\) in \(\pi_{1}(Y) \wr \Mon \beta\) instead allows subsequently determining \(\Mon \beta \circ \gamma\), for any \(\gamma\), using only the monodromy representation of \(\gamma\).
\end{abstract}

\maketitle

\section{Introduction}

A Bely\u{\i} map is a meromorphic function \(\beta\) from a Riemann surface \(X\) to \(\mathbb{P}^{1}(\mathbb{C})\) which is unbranched outside of \(\{0, 1, \infty\}\).  The pair \((X, \beta)\) is called a Bely\u{\i} pair.  Let
\[\mathbb{P}^{1}(\mathbb{C})_{*} \coloneqq \mathbb{P}^{1}(\mathbb{C}) \backslash \{0, 1, \infty\},\qquad X_{*} \coloneqq X \backslash \beta^{-1}(\{0, 1, \infty\}).\]
Associated to \((X, \beta)\) is its monodromy representation \(\rho: \pi_{1}(\mathbb{P}^{1}(\mathbb{C})_{*}, z) \rightarrow S_{d}\), where \(z \in \mathbb{P}^{1}(\mathbb{C})\) is a fixed base point and \(d\) is the degree of \(\beta\).  Let \(\Mon \beta\) denote the monodromy group of \(\beta\), which is defined as the image of \(\rho\).

Historically, the monodromy group was considered to carry the important group theoretic information about the ramification of the Bely\u{\i} map.  However, the monodromy group alone is insufficient for determining the monodromy of a composition of Bely\u{\i} maps.  For example, let
\[\beta_{1}(x) = x^{3},\qquad
\beta_{2}(x) = (1-x)^{3},\qquad
\gamma(x) = x^{2}.
\]
Then \(\Mon \beta_{1} \approx \Mon \beta_{2} \approx C_{3}\), but
\[\Mon (\beta_{1} \circ \gamma) \approx C_{6}\ (\mathrm{6T1}) \not\approx C_{2} \times A_{4}\ (\mathrm{6T6}) \approx \Mon (\beta_{2} \circ \gamma).\]
This shows that although the monodromy groups of \(\beta_{1}\) and \(\beta_{2}\) are isomorphic, composing each \(\beta_{i}\) with \(\gamma\) results in distinct monodromy groups.

To this end, the extended monodromy group will be introduced for Bely\u{\i} maps satisfying \(\beta(\{0, 1, \infty\}) \subseteq \{0, 1, \infty\}\).
An object called the extending pattern of \(\beta\) will be defined which will provide a map, the extended monodromy representation,
\[\pi_{1}(\mathbb{P}^{1}(\mathbb{C})_{*}, z) \rightarrow \pi_{1}(X_{*}, x) \wr_{E_{\beta}} \Mon \beta,\]
where \(E_{\beta}\) is the set of edges of \(\beta\) and \(x \in X\) is a fixed base point.  The extended monodromy group, denoted \(\EMon \beta\), will then be defined as the image of the extended monodromy representation.  Note that \(\Mon \beta\) can be recovered from \(\EMon \beta\) through projection onto the second component of \(\EMon \beta\).

Finally, for any Bely\u{\i} map \(\gamma\), \(\Mon \beta \circ \gamma\) is easily recovered from \(\EMon \beta\) through postcomposition of the first component of \(\EMon \beta\) by the monodromy representation of \(\gamma\).  In particular, it will be shown that given the extending pattern of \(\beta\), as well as the monodromy representations of \(\beta\) and \(\gamma\), the monodromy representation of \(\beta \circ \gamma\) can be efficiently determined.  Although not effectively computable for larger examples, a group theoretic description of the extended monodromy group is also given.

\section{Background}

Consideration of compositions of Bely\u{\i} maps has many motivations in light of their correspondence with dessins d'enfants, called dessins for short.  Shabat \& Zvonkin \cite[Example 6.1]{zvonkinshabat} refer to composition as ``a manifestation of the {\it hidden symmetries}'' of a dessin.  When a dessin decomposes as a composition of dessins, the Bely\u{\i} map of the composition can be determined by computing the Bely\u{\i} map of each piece of the composition.  In the opposite direction, increasingly complex pairs of Bely\u{\i} maps and dessins can be established through composition of simpler pairs.  Bely\u{\i} maps have even been considered for use in cryptography by way of composition.\ \cite[Chapter 5]{bond}

\subsection{Setup}\hspace*{\fill}

The rigid nature of Bely\u{\i} maps means that compositions of Bely\u{\i} maps do not always result in another Bely\u{\i} map.  However, in the case that \(\beta(\{0, 1, \infty\}) \subseteq \{0, 1, \infty\}\) for a Bely\u{\i} map \(\beta\), then for any Bely\u{\i} map \(\gamma\), \(\beta \circ \gamma\) is again a Bely\u{\i} map.

\begin{defn}
    \cite[Section 2.5.5]{landozvonkin}
    A Bely\u{\i} map \(\beta: \mathbb{P}^{1}(\mathbb{C}) \rightarrow \mathbb{P}^{1}(\mathbb{C})\) is a dynamical Bely\u{\i} map if \(\beta(\{0, 1, \infty\}) \subseteq \{0, 1, \infty\}\).
\end{defn}

\begin{lem}
    \cite[cf. Prop. 2.5.17]{landozvonkin}
    If \(\gamma: X \rightarrow \mathbb{P}^{1}(\mathbb{C})\) is a Bely\u{\i} map and \(\beta\) is a dynamical Bely\u{\i} map, then \(\beta \circ \gamma\) is a Bely\u{\i} map.
\end{lem}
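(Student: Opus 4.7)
The plan is to verify the two defining properties of a Bely\u{\i} map for the composition: that \(\beta \circ \gamma\) is a meromorphic map \(X \to \mathbb{P}^{1}(\mathbb{C})\), and that its branch locus lies in \(\{0, 1, \infty\}\). Meromorphicity is immediate, since \(\gamma\) is meromorphic on \(X\) and \(\beta\) is a meromorphic self-map of \(\mathbb{P}^{1}(\mathbb{C})\), so I would dispatch this in a single sentence and focus the argument on the branching.

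For the branch locus, I would invoke the chain rule (or equivalently, the multiplicativity of ramification indices under composition): a point \(x \in X\) is a critical point of \(\beta \circ \gamma\) exactly when either \(x\) is a critical point of \(\gamma\) or \(\gamma(x)\) is a critical point of \(\beta\). Consequently, the set of critical values of \(\beta \circ \gamma\) satisfies
\[
\mathrm{CritVal}(\beta \circ \gamma) \;\subseteq\; \beta\bigl(\mathrm{CritVal}(\gamma)\bigr) \;\cup\; \mathrm{CritVal}(\beta).
\]

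The hypotheses then finish the argument in a single line: since \(\gamma\) is a Bely\u{\i} map, \(\mathrm{CritVal}(\gamma) \subseteq \{0, 1, \infty\}\); since \(\beta\) is dynamical, \(\beta(\{0,1,\infty\}) \subseteq \{0,1,\infty\}\); and since \(\beta\) is itself a Bely\u{\i} map, \(\mathrm{CritVal}(\beta) \subseteq \{0,1,\infty\}\). Combining these yields \(\mathrm{CritVal}(\beta \circ \gamma) \subseteq \{0, 1, \infty\}\), which is precisely the condition required.

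There is no real obstacle here; the only subtlety worth flagging explicitly is that the dynamical hypothesis \(\beta(\{0,1,\infty\}) \subseteq \{0,1,\infty\}\) is exactly what is needed to absorb the image \(\beta(\mathrm{CritVal}(\gamma))\) back into \(\{0,1,\infty\}\). Without it, \(\beta \circ \gamma\) could acquire new critical values in \(\mathbb{P}^{1}(\mathbb{C}) \setminus \{0,1,\infty\}\) lying over the image under \(\beta\) of \(0\), \(1\), or \(\infty\), which would break the Bely\u{\i} condition. This is the one point I would be careful to highlight in the write-up.
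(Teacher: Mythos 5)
The paper does not actually prove this lemma; it simply cites it (as ``cf.\ Prop.\ 2.5.17'' of Lando--Zvonkin) and moves on. Your argument is correct and is exactly the standard proof behind that citation: the inclusion \(\mathrm{CritVal}(\beta \circ \gamma) \subseteq \beta\bigl(\mathrm{CritVal}(\gamma)\bigr) \cup \mathrm{CritVal}(\beta)\) follows from the chain rule, and the dynamical hypothesis \(\beta(\{0,1,\infty\}) \subseteq \{0,1,\infty\}\) is used precisely where you flag it, to absorb \(\beta\bigl(\mathrm{CritVal}(\gamma)\bigr)\) back into \(\{0,1,\infty\}\). No gaps; this fills in the omitted proof correctly.
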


In the case of a dynamical Bely\u{\i} map \(\beta: \mathbb{P}^{1}(\mathbb{C}) \rightarrow \mathbb{P}^{1}(\mathbb{C})\), let the domain of \(\beta\) be denoted by \(Y\) and the codomain by \(Z\).  Further, fix basepoints \(y \in Y\) and \(z \in Z\) and let \(\pi_{1}^{Y} \coloneqq \pi_{1}(Y, y)\) and \(\pi_{1}^{Z} \coloneqq \pi_{1}(Z, z)\).

\stepcounter{thm}
\begin{figure}[ht]
\centering
    \begin{tabular}{ccccc}
        \raisebox{-0.5\height}{\includegraphics[scale=0.8]{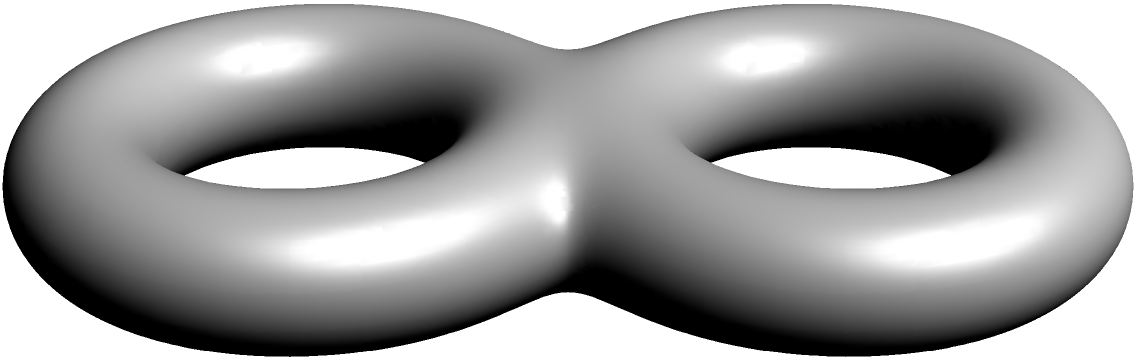}} &
        {
            \begin{tabular}{c}
                {\LARGE \(\gamma\)}\\
                \scalebox{3.5}[1.5]{\(\rightarrow\)}
            \end{tabular}
        } &
        \raisebox{-0.5\height}{\includegraphics[scale=0.45]{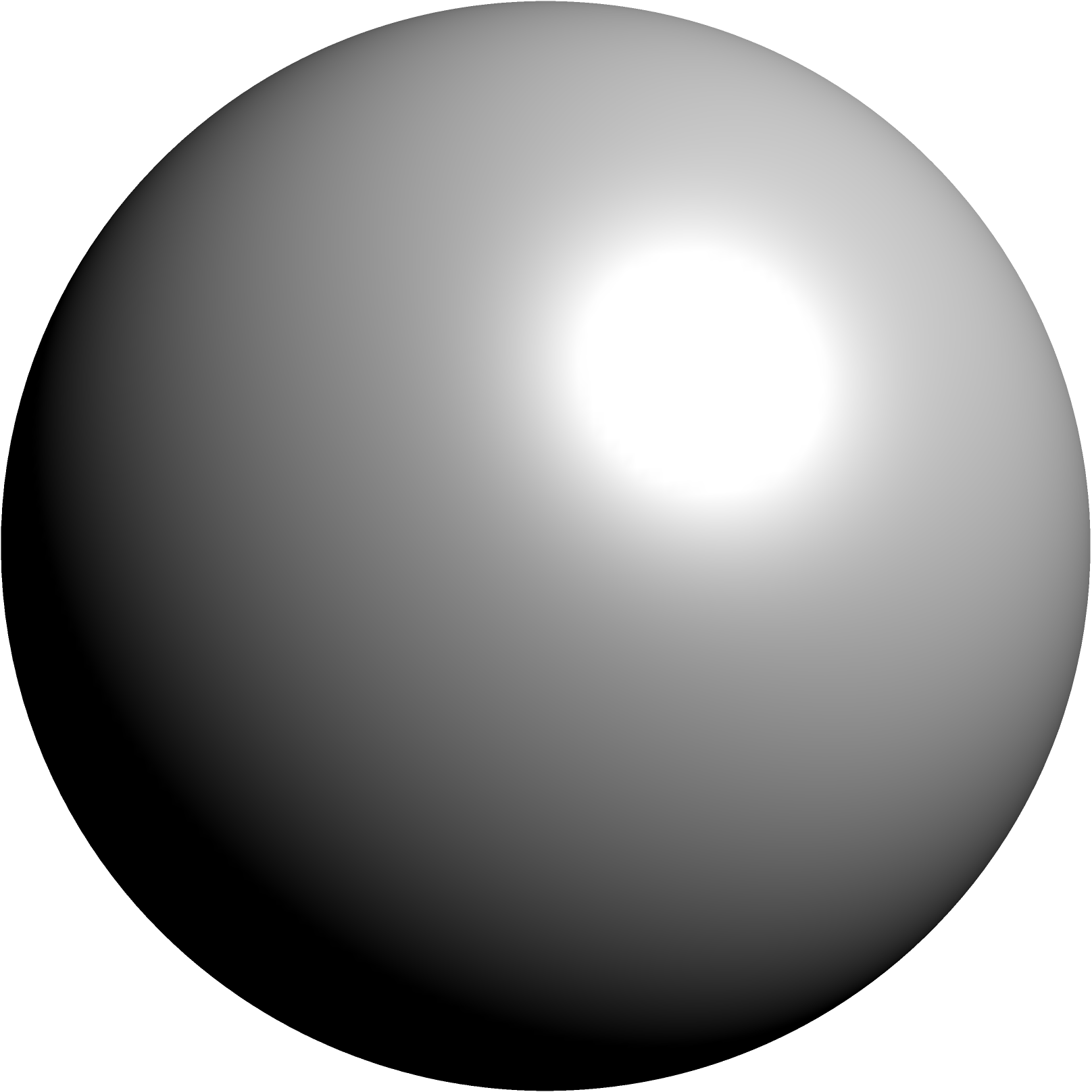}} &
        {
            \begin{tabular}{c}
                {\LARGE \(\beta\)}\\
                \scalebox{3.5}[1.5]{\(\rightarrow\)}
            \end{tabular}
        } &
        \raisebox{-0.5\height}{\includegraphics[scale=0.45]{images/sphere.png}} \\
        \raisebox{-0.75\height}{\Large\(X\)} && \raisebox{-0.75\height}{\Large\(Y\)} && \raisebox{-0.75\height}{\Large\(Z\)}
    \end{tabular}
\caption{A composition of covering maps}
\end{figure}

Associated to a Bely\u{\i} map \(\gamma\) is a dessin d'enfant, \(\Delta_{\gamma}\), obtained from the preimage of the interval \([0, 1]\) by neglecting the complex structure of the preimage.

\begin{defn}
A dessin d'enfant is a graph with a fixed bipartite structure and a labelling of the edges which specifies a cyclic ordering of the edges around each vertex.
\end{defn}

\noindent The correspondence of a preimage \(\gamma^{-1}([0, 1])\) with a dessin is made by associating \(\gamma^{-1}\big(\,(0, 1)\,\big)\) with the edges of the dessin and the points \(\gamma^{-1}(\{0, 1\})\) with the bipartition of the vertices according to whether a point lies over \(0\) or \(1\).  The cyclic ordering of the edges around the vertices in each subset of the bipartition yields a pair of permutations \(\sigma_{0}, \sigma_{1}\), arising from the vertices lying over \(0\) and \(1\), respectively.  Setting \(\sigma_{\infty} \coloneqq (\sigma_{1}\sigma_{0})^{-1}\) produces a 3-constellation \(\{\sigma_{0}, \sigma_{1}, \sigma_{\infty}\}\):  a triple that acts transitively on \(\{1, \ldots, n\}\) and whose product is the identity.\ \cite[Section 1.1]{landozvonkin}

\bigskip\noindent
{\it Notation.} Group actions will be applied on the right as \(x^{g}\).  This agrees with the standard notation for path composition where \(p_{1} \ast p_{2}\) traverses \(p_{1}\) followed by \(p_{2}\).  Additionally, computer algebra systems, such as SageMath \cite{sage} and GAP \cite{gap}, multiply permutations from left to right.

\subsection{Related Work}\hspace*{\fill}

Shabat \& Zvonkin \cite[Section 6]{zvonkinshabat} observe that the operation of composition of two plane trees \(\Delta_{\beta}, \Delta_{\gamma}\) is tantamount to substituting \(\Delta_{\beta}\) for every edge of \(\Delta_{\gamma}\).  Adrianov \& Zvonkin \cite[Theorem 3.3]{zvonkinadrianov} refine this notion, obtaining a description of the permutations \(\sigma_{0}, \sigma_{1}\) defining the plane tree \(\Delta_{\beta \circ \gamma}\), although they stop short of identifying the group \(\langle \sigma_{0}, \sigma_{1}\rangle\) or determining when it is a proper subgroup of \(\Mon \beta \wr \Mon \gamma\).  Lando \& Zvonkin \cite[Prop. 1.7.10]{landozvonkin} reiterate that the monodromy group ``can be represented as a subgroup of'' \(\Mon \beta \wr \Mon \gamma\).  Wood \cite[Section 3.3]{wood} obtains a similar description as \cite{zvonkinadrianov} for \(\sigma_{0}, \sigma_{1}\) in the case of Bely\u{\i} maps defined over \(\mathbb{R}\) through a geometric approach.

\subsection{Organization}\hspace*{\fill}

In Section 3, the extending pattern of a Bely\u{\i} map will be established by tracking the way in which it lifts the canonical triangulation \cite[Section 1.5.4]{landozvonkin} and the way in which the preimages of \([0, 1]\) intersect the lifted structure.  Section 4 will use the extending pattern of a Bely\u{\i} map to construct the extended monodromy group \(\EMon \beta\).  Having established \(\EMon \beta\), Section 5 will determine generators for \(\Mon \beta \circ \gamma\), and Section 6 will describe the structure of \(\Mon \beta \circ \gamma\) as a subgroup of \(\Mon \gamma \wr_{E_{\beta}} \Mon \beta\), concluding with an example.

\section{Constructing the Extending Pattern}

Being an unramified covering map away from \(\{0, 1, \infty\}\), the preimage of a Bely\u{\i} map \(\gamma^{-1}(\mathbb{P}^{1}(\mathbb{C})_{*})\) decomposes into disjoint sheets \(\{\mathcal{S}_{i}\}_{i = 1}^{n}\), where \(\deg \gamma = n\).  The approach taken in determining the monodromy of a composition uses sheets of covering maps to track traversals around the points \(0\), \(1\), and \(\infty\).  For this reason, it is important to have a precise specification of the sheets lying over \(\mathbb{P}^{1}(\mathbb{C})\), which also enables a correspondence between the edges \(E_{\beta \circ \gamma}\) of a composition and the Cartesian product of edges of each function \(E_{\beta} \times E_{\gamma}\).\ \cite[Prop. 3.5]{wood}

Key to the determination of the monodromy group of a composition will be consideration of the effect traversing paths between edges of \(\beta\) has on the sheets of \(\gamma\) when the paths are lifted by \(\gamma\).  To evaluate this effect, an object referred to as the extending pattern, which is constructed by examining the paths between edges of \(\beta\), will be utilized.

\subsection{Covering Maps, Sheets, \& Edges}\hspace*{\fill}

For notational convenience, let \([-\infty, 0]\) denote the nonpositive real axis together with \(\infty\) as a subset of \(\mathbb{P}^{1}(\mathbb{C})\).  Further, let \(\mathbb{H}\) denote the upper half-plane of \(\mathbb{C}\) embedded into \(\mathbb{P}^{1}(\mathbb{C})\) and \(\overline{\mathbb{H}}\) denote the closure of \(\mathbb{H}\) in \(\mathbb{P}^{1}(\mathbb{C})\).

Let \(\{B_{i}\}_{i = 1}^{n}\) be the path components of
\[\gamma^{-1}\Big(\mathbb{P}^{1}(\mathbb{C}) \backslash ([-\infty, 0] \cup [1, \infty]) \Big).\]
Let \(\{\mathscr{T}_{i}\}_{i = 1}^{n}\) be the path components of \(\gamma^{-1}\big(\overline{\mathbb{H}}\big)\), ordered so that \(\mathscr{T}_{i}\) is the unique path component of \(\gamma^{-1}(\overline{\mathbb{H}})\) intersecting \(B_{i}\).  Then \(\gamma\) maps \(B_{i} \cup \mathscr{T}_{i}\) homeomorphically onto \(\mathbb{P}^{1}(\mathbb{C})\).\ \cite[Prop. 13.3]{bump}

\begin{conv}
The sheets of the Bely\u{\i} map \(\gamma\), regarded as a covering map of \(\mathbb{P}^{1}(\mathbb{C})_{*}\), will be defined to be \(\big\{(B_{i} \cup \mathscr{T}_{i}) \backslash \{0, 1, \infty\}\big\}_{i = 1}^{n}\), as presented above.
\label{sheetconv}
\end{conv}

The various sets just introduced are illustrated in Figure \ref{fundamentaldomain}, in which \begin{itemize}
    \item a set \(B_{i}\) is the interior of a pair of dark and light triangles together with the edge of the dessin between them, 
    \item a set \(\mathscr{T}_{i}\) is a dark triangle together with its boundary, and
    \item a sheet of \(\gamma\) is a set \(B_{i}\) together with the boundary of dark triangle contained in \(B_{i}\).
\end{itemize}

\stepcounter{thm}
\begin{figure}[ht]
    \centering
    \includegraphics[scale=0.55]{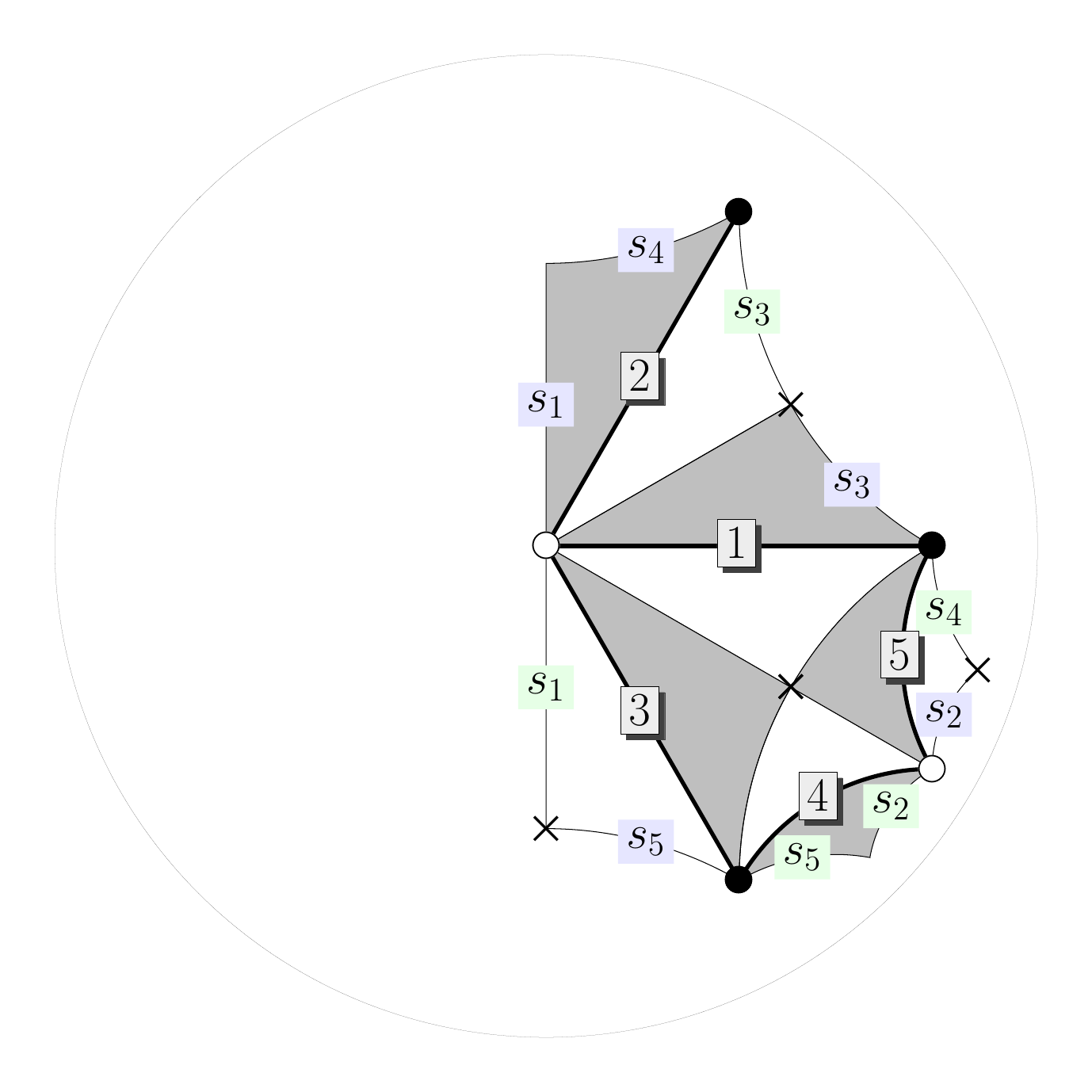}
    \caption{A fundamental domain for a dessin \(\Delta\).  Image from \cite{belyidb,voightetal}.}
    \label{fundamentaldomain}
\end{figure}

\noindent Note that this convention is related to the concept of the canonincal triangulation \cite[Section 1.5.4]{landozvonkin} and coincides with the approach used by Wood \cite{wood}.

The edges of the dessins and their Bely\u{\i} maps will play a critical role, so that it is important to have an explicit definition for the edges of a Bely\u{\i} map.  Let \(\mathbb{I} = [0, 1] \subseteq \mathbb{R}\), which, by abuse of notation, may also be considered as a subset of \(\mathbb{C}\) or \(\mathbb{P}^{1}(\mathbb{C})\).  Analogously, let \(\mathbb{I}^{\circ}\) denote the interior of \(\mathbb{I}\) in \(\mathbb{R}\), though it may be embedded in \(\mathbb{P}^{1}(\mathbb{C})\) as well.
\begin{defnnum}
    An edge of a Bely\u{\i} map \(\gamma\) is a lifting of \(\id_{\mathbb{I}}: x \mapsto x\) by \(\gamma\).  That is, an edge of \(\gamma\) is a continuous function \(e: \mathbb{I} \rightarrow \gamma^{-1}(\mathbb{I})\) satisfying \(\gamma \circ e = \id_{\mathbb{I}}\).
\end{defnnum}
\noindent An edge will often be implicitly identified with its image in \(\mathbb{P}^{1}(\mathbb{C})\).

Let \(E_{\gamma}\) denote the set of edges of \(\gamma\).  Since each sheet \(\mathcal{S}_{i}\) of \(\gamma\) maps homeomorphically onto \(\mathbb{P}^{1}(\mathbb{C})\), there is a bijection between edges \(E_{\gamma}\) and sheets \(\{\mathcal{S}_{i}\}\).

\begin{prop}
    There is a bijection \(E_{\beta \circ \gamma} \leftrightarrow E_{\gamma} \times E_{\beta}\) defined as follows:
    
    \begin{center}
    For an edge \(e \in E_{\beta \circ \gamma}\), let \(\mathcal{S}_{e}\) be the sheet of \(\gamma\) containing \(e(1/2)\) and\newline let \(e_{\gamma}\) be the unique element of \(E_{\gamma}\) with \(e_{\gamma} \in \mathcal{S}_{e}\).  Then
    \[e \longleftrightarrow \big(e_{\gamma}, \gamma(e)\big).\]
    \end{center}
    \label{edgebijection}
\end{prop}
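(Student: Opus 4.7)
The plan is to verify that the map $\Phi \colon E_{\beta \circ \gamma} \to E_{\gamma} \times E_{\beta}$ given by $e \mapsto (e_{\gamma}, \gamma \circ e)$ is well-defined, then construct an explicit two-sided inverse using the fact that $\gamma$ restricts to a homeomorphism on each set $B_{i} \cup \mathscr{T}_{i}$.

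First I would verify well-definedness. The second component $\gamma \circ e$ is an edge of $\beta$ because $\beta \circ (\gamma \circ e) = (\beta \circ \gamma) \circ e = \id_{\mathbb{I}}$. For the first component, I must show that the sheet of $\gamma$ containing $e(1/2)$ is uniquely determined. Since $1/2 \notin \{0, 1, \infty\}$, the point $e(1/2)$ avoids $(\beta \circ \gamma)^{-1}(\{0, 1, \infty\})$, and moreover $\gamma(e(1/2)) \in \beta^{-1}(1/2) \subset \mathbb{I}^{\circ}$ lies in the open set $\mathbb{P}^{1}(\mathbb{C}) \backslash ([-\infty, 0] \cup [1, \infty])$. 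Hence $e(1/2)$ lies in a unique component $B_{i}$, and thus in a unique sheet $\mathcal{S}_{i}$; the edge $e_{\gamma}$ is then the unique element of $E_{\gamma}$ in $\mathcal{S}_{i}$ via the bijection between sheets and edges of $\gamma$ noted just before the proposition.

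For surjectivity, given $(e_{\gamma}, e_{\beta}) \in E_{\gamma} \times E_{\beta}$, let $\mathcal{S}_{i}$ be the sheet of $e_{\gamma}$ and define
\[
e \coloneqq (\gamma|_{B_{i} \cup \mathscr{T}_{i}})^{-1} \circ e_{\beta}.
\]
Because $\gamma$ maps $B_{i} \cup \mathscr{T}_{i}$ homeomorphically onto $\mathbb{P}^{1}(\mathbb{C})$, this $e$ is continuous and satisfies $\beta \circ \gamma \circ e = \beta \circ e_{\beta} = \id_{\mathbb{I}}$, so $e \in E_{\beta \circ \gamma}$. Moreover $\gamma \circ e = e_{\beta}$ and $e(1/2) \in B_{i} \subseteq \mathcal{S}_{i}$ (since $e_{\beta}(1/2)$ lies in the region defining $B_{i}$), so $\Phi(e) = (e_{\gamma}, e_{\beta})$. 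For injectivity, if $\Phi(e) = \Phi(e')$ then $\gamma \circ e = \gamma \circ e'$ and both $e(1/2), e'(1/2)$ lie in a common sheet $\mathcal{S}_{i}$; by continuity and connectedness of $(0, 1)$, the images $e((0,1))$ and $e'((0,1))$ both lie in $B_{i}$, on which $\gamma$ is injective, forcing $e \equiv e'$ on $(0,1)$ and hence on $\mathbb{I}$.

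The main obstacle is the bookkeeping around sheet boundaries: the sets $\mathscr{T}_{i}$ overlap on $\gamma^{-1}(\mathbb{R} \cup \{\infty\})$, so a priori $e(1/2)$ could sit on such a boundary. The key observation that sidesteps this is that $\gamma(e(1/2)) \in \mathbb{I}^{\circ}$ belongs to the open region whose components are exactly the $B_{i}$, so $e(1/2)$ lands in the open interior of a single sheet rather than on any shared boundary — this is what makes both $\mathcal{S}_{e}$ and the construction of the inverse unambiguous.
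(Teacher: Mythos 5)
Your overall strategy (well-definedness, explicit inverse for surjectivity, direct injectivity) is reasonable and differs from the paper, which proves surjectivity and then concludes bijectivity by a cardinality count, never arguing injectivity directly. But your argument rests on a claim that is false: you assert \(\gamma(e(1/2)) \in \beta^{-1}(1/2) \subset \mathbb{I}^{\circ}\). The set \(\beta^{-1}(1/2)\) consists of \(\deg \beta\) points of \(Y\), one on the interior of each edge of the dessin of \(\beta\), and these need not lie in \((0,1)\subset Y\). For example, \(\beta(x) = x^{2}\) is a dynamical Bely\u{\i} map and \(-1/\sqrt{2} \in \beta^{-1}(1/2)\) lies on the negative real axis of \(Y\). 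You have conflated the point \(1/2 = (\beta \circ \gamma)\big(e(1/2)\big) \in Z\) with its preimages under \(\beta\) in \(Y\). Since you explicitly identify this as ``the key observation that sidesteps'' the boundary bookkeeping, the gap is load-bearing: \(e(1/2)\) can lie on \(\gamma^{-1}\big((-\infty,0)\cup(1,\infty)\big)\) and hence in no \(B_{i}\) at all. Indeed, edges of \(\beta\) meeting \([-\infty,0]\cup[1,\infty]\) away from their endpoints is precisely the phenomenon the extending pattern of Section 3 is designed to record, so it cannot be assumed away.

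The individual pieces can be repaired, but each needs a different justification than the one you gave. Well-definedness of \(\mathcal{S}_{e}\) follows not from \(e(1/2)\) lying in some \(B_{i}\), but from Convention \ref{sheetconv}: the sheets \((B_{i}\cup\mathscr{T}_{i})\setminus\{0,1,\infty\}\) partition \(X\) minus the vertices, and \(e(1/2)\) is not a vertex because \(\beta(\{0,1,\infty\})\subseteq\{0,1,\infty\}\not\ni 1/2\). Your surjectivity construction \(e = (\gamma|_{B_{i}\cup\mathscr{T}_{i}})^{-1}\circ e_{\beta}\) is fine, but the parenthetical should read that \(e(1/2)\in B_{i}\cup\mathscr{T}_{i}\) by construction (it may land in \(\mathscr{T}_{i}\setminus B_{i}\)), which still places it in the sheet \(\mathcal{S}_{i}\). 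The injectivity argument is the most damaged: \(e((0,1))\) need not stay in \(B_{i}\) when the edge \(e_{\beta}=\gamma\circ e\) crosses \([-\infty,0]\cup[1,\infty]\), so ``\(\gamma\) is injective on \(B_{i}\)'' does not apply along the whole edge. Instead, use injectivity of \(\gamma\) on the full sheet \(B_{i}\cup\mathscr{T}_{i}\) to conclude \(e(1/2)=e'(1/2)\), and then unique path lifting of \(e_{\beta}|_{(0,1)}\) (a path in \(Y\setminus\{0,1,\infty\}\), over which \(\gamma\) is an honest covering) to conclude \(e=e'\). Alternatively, adopt the paper's route and replace injectivity by the observation that both sides have cardinality \(\deg\beta\cdot\deg\gamma\).
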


\bigskip\noindent
{\it Note.} There is nothing special about the point \(1/2\) in Proposition \ref{edgebijection}, as any point in the interval \((0, 1)\) would suffice.  Its sole purpose is to assign a unique sheet to each edge \(e_{\beta \circ \gamma} \in E_{\beta \circ \gamma}\).

\begin{proof}
Given an edge \(e_{\beta \circ \gamma}\) of \(\beta \circ \gamma\), \(\gamma(e_{\beta \circ \gamma})\) is an edge of \(\beta\) as
\[\beta \circ \gamma(e_{\beta \circ \gamma}) = \id_{\mathbb{I}} \Longrightarrow \beta(\gamma \circ e_{\beta \circ \gamma}) = \id_{\mathbb{I}}.\]

To see that the mapping is surjective, let \((e_{\gamma}, e_{\beta}) \in E_{\gamma} \times E_{\beta}\).  Then \(\gamma^{-1}(e_{\beta})\) consists of \(\deg \gamma\) distinct preimages of \(e_{\beta}\), each constituting an edge of \(\beta \circ \gamma\).  In particular, each of the \(\deg \gamma\) points in \(A \coloneqq \gamma^{-1}\big(e_{\beta}(1/2)\big)\) is a preimage of \(1/2\) by \(\beta \circ \gamma\), as
\[\beta \circ \gamma \left( \gamma^{-1} \circ e_{\beta}\left(\frac{1}{2}\right)\right) = \frac{1}{2}.\]
Finally, each point of \(A\) must lie in a distinct sheet of \(\gamma\) because \(\gamma\) maps each sheet injectively onto \(\mathbb{P}^{1}(\mathbb{C})\), so that exactly one point \(a\) of \(A\) lies in the sheet containing \(e_{\gamma}\).  The edge \(e\) of \(E_{\beta \circ \gamma}\) containing \(a\) maps to \((e_{\gamma}, e_{\beta})\).

By the cardinality of the involved sets from degree considerations, the mapping is a bijection.
\end{proof}

% mu(z) = d.find_moebius([0, 1, -1/2], [0, 1, -1/5*I*sqrt(3) - 1/5])
The bijection of Proposition \ref{edgebijection} is illustrated in Figure \ref{bijectionfig} for
\begin{gather}
\beta(x) = \big(1 - \mu(x)\big)^{3}, \qquad \gamma(x) = 3x^{2} - 2x^{3},\\
\mu(x) = \frac{-21x}{5i\sqrt{3}(x-1) - 11x-10}.\nonumber
\label{betadef}
\end{gather}
Henceforth, \(E_{\beta \circ \gamma}\) will be identified with \(E_{\gamma} \times E_{\beta}\) by this bijection.

\stepcounter{thm}
\begin{figure}[ht]
    \centering
    \subcaptionbox{A dynamical Bely\u{\i} map \(\beta\)}{%
        \includegraphics[width=0.475\textwidth]{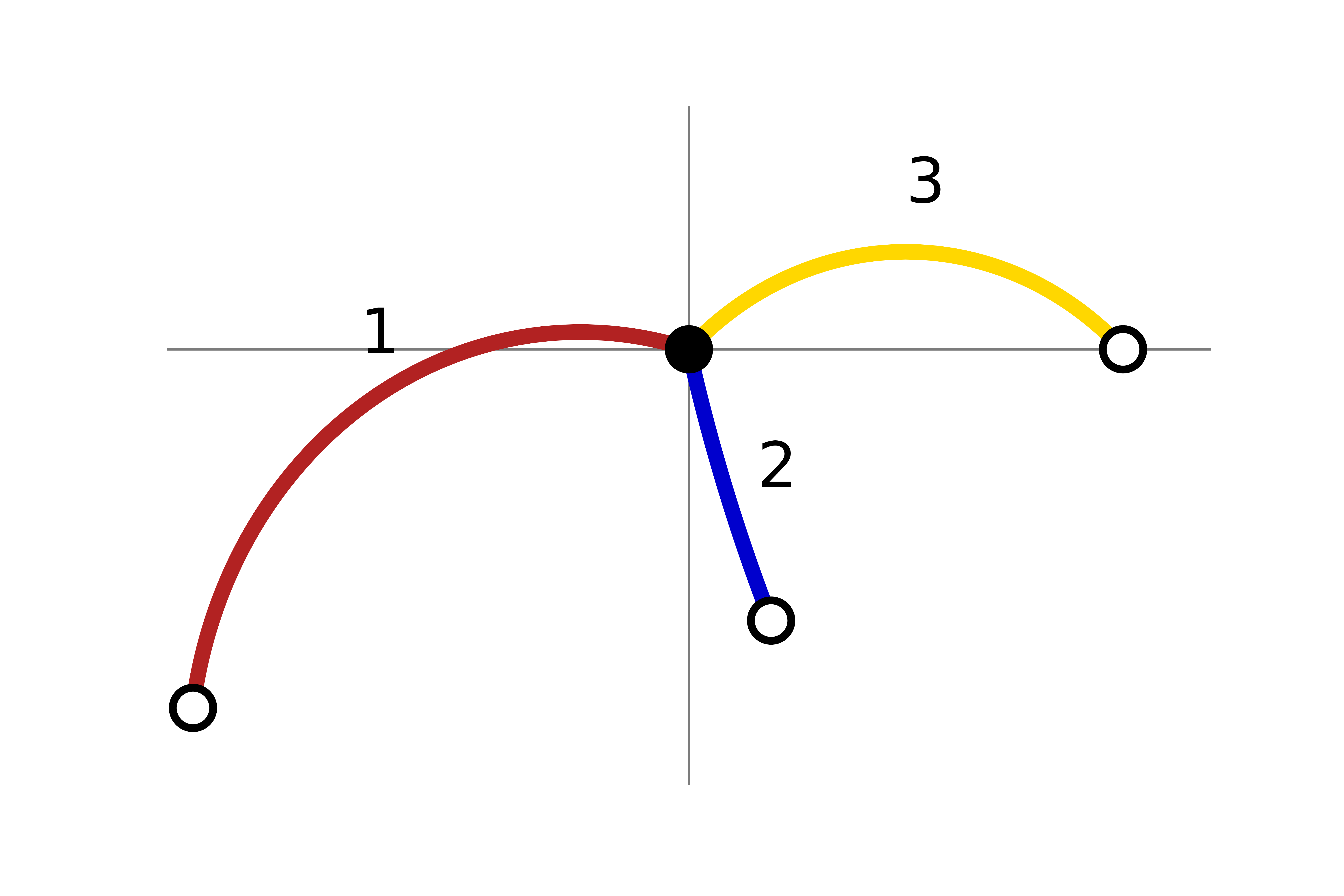}}\hspace{0.01\textwidth}%
    \subcaptionbox{A Bely\u{\i} map \(\gamma\) and its sheets}{%
        \includegraphics[width=0.475\textwidth]{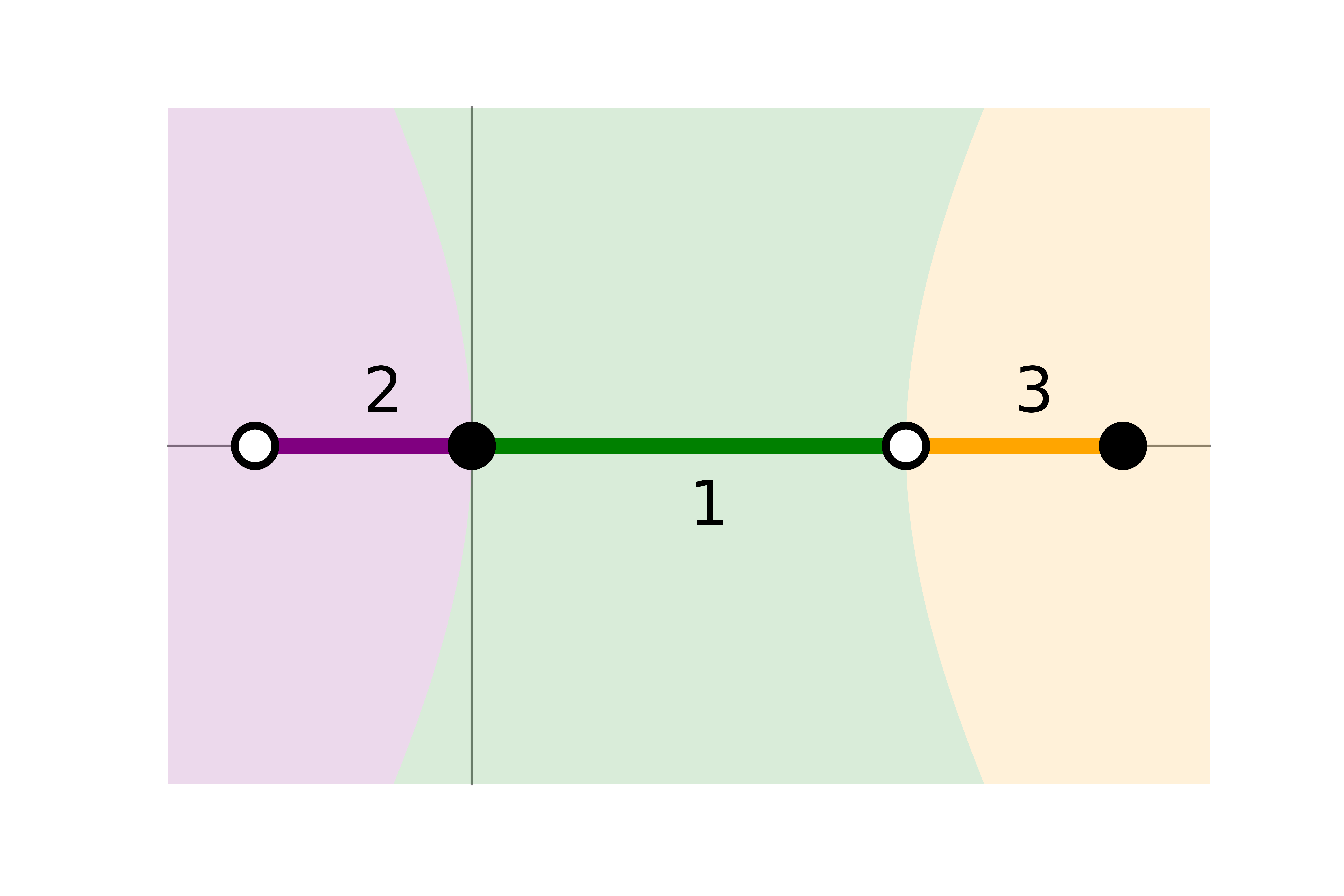}}\\\medskip%
    \subcaptionbox{The Bely\u{\i} map \(\beta \circ \gamma\); \textbf{\textsf{x}}'s indicate preimages of \(1/2\)}{%
        \includegraphics[width=0.75\textwidth]{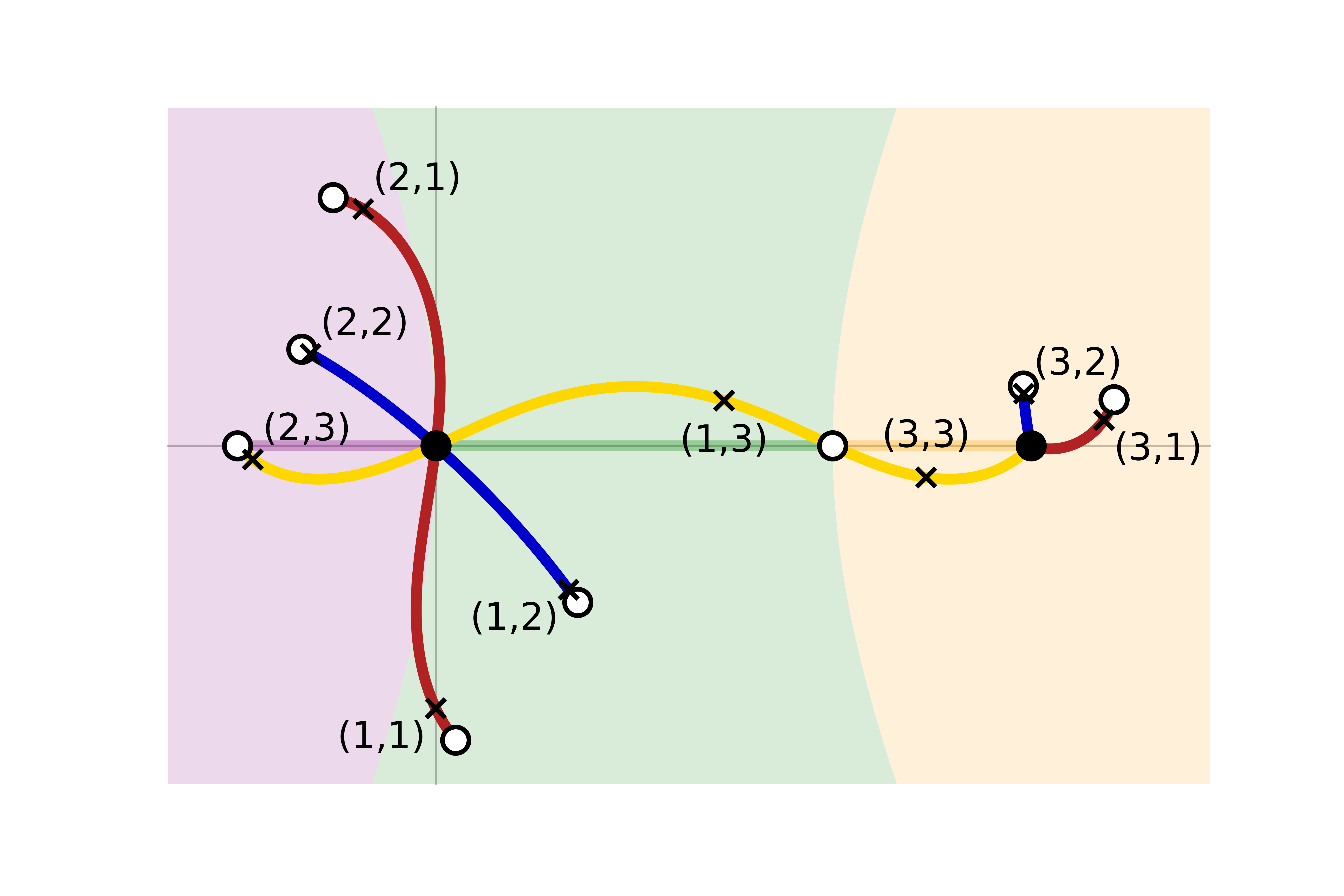}}%
\caption{A composition of Bely\u{\i} maps demonstrating the bijection \(E_{\beta\gamma} \leftrightarrow E_{\gamma} \times E_{\beta}\)}
\label{bijectionfig}
\end{figure}

\subsection{Action of Paths on Sheets}\hspace*{\fill}
\label{pathsonsheets}

As the edges of \(\beta \circ \gamma\) are liftings of \(E_{\beta}\) by \(\gamma\), a path \(p: [0, 1] \rightarrow Y\) between edges of \(\beta\) will lift to \(\deg \gamma\) paths between edges of \(\beta \circ \gamma\).  It is clear how these paths act on the \(E_{\beta}\) component of \(E_{\gamma} \times E_{\beta}\), but the question remains as to how they act on the \(E_{\gamma}\) component.  Analyzing the action on the sheets of \(\gamma\) by a path traversing between edges of \(\beta\) will provide the answer using the bijection between \(E_{\gamma}\) and the sheets of \(\gamma\).  The action is summarized nicely by Wood in \cite[Remark 3.6]{wood}, paraphrased here:

\hfill\begin{minipage}{\dimexpr\textwidth-3\parindent}
The path \(p\) can be viewed as an element of \(\pi_{1}(\mathbb{P}^{1}(\mathbb{C})_{*}, \mathcal{S})\), where the base-point is the sheet \(\mathcal{S}\).  However, \(\mathcal{S}\) can be canonically identified with the interval \((0, 1)\), hence the point \(1/2\), since \((0, 1)\) lies inside \(\mathcal{S}\).  This allows us to identify the path \(p\) with an element \(p^{\circlearrowleft} \in \pi_{1}(\mathbb{P}^{1}(\mathbb{C})_{*}, 1/2)\).  Then \(\sigma_{p^{\circlearrowleft}}\) is just the image of \(p^{\circlearrowleft}\) in \(\Mon \gamma\). 
\end{minipage}\newline

\noindent What is being described is that just as the monodromy of a Bely\u{\i} map \(\gamma\) is determined according to how the sets of points comprising the edges \(E_{\gamma}\) are mapped onto one another by the lifting a loop in \(\pi_{1}(\mathbb{P}^{1}(\mathbb{C})_{*}, 1/2)\), so too the action of a path \(p\) on the \(E_{\gamma}\) component of \(E_{\beta \circ \gamma}\) can be determined according to how the sets of points comprising the sheets of of \(\gamma\) are mapped onto one another. 

It is instructive to make this correspondence more explicit.

\begin{const}
Let \(p\) be a path in \(\mathbb{P}^{1}(\mathbb{C})_{*}\).  For \(j = 0, 1\), let \(s_{j} \coloneqq -\sgn(\re p(j))\) and let
\[\alpha_{p,j}(t) \coloneqq \begin{cases}
    \frac{1/2 + p(j)}{2} + \frac{1/2 - p(j)}{2}e^{s_{j}\pi it} & \textrm{if }p(j) \in (-\infty, 0) \cup (1, \infty),\\
    (1 - t)p(j) + \frac{1}{2}t & \textrm{else},
    \end{cases}\]
Finally, let
\[p^{\circlearrowleft} \coloneqq \alpha_{p,0}^{-1} \ast p \ast \alpha_{p,1}.\]
\label{loopconstruction}
\end{const}
\noindent If \(p(j)\), \(j = 0, 1\), lies in either interval \((-\infty, 0)\) or \((1, \infty)\), \(\alpha_{j}\) is a circular arc from \(p(j)\) through \(\mathbb{H}^{+}\) to \(\frac{1}{2}\).  Otherwise, it is the straight-line path from \(p(j)\) to \(\frac{1}{2}\).  The result of Construction \ref{loopconstruction} is a homomorphism of groupoids.

\begin{lem}
The function
\begin{align*}
\cdot^{\circlearrowleft}: \textrm{paths in }\mathbb{P}^{1}(\mathbb{C})_{*}\big/\!\simeq_{p} &\twoheadrightarrow \pi_{1}(\mathbb{P}^{1}(\mathbb{C})_{*}, 1/2),\\
p &\mapsto p^{\circlearrowleft},
\end{align*}
where \(\simeq_{p}\) is equivalence under path homotopy, is a surjective homomorphism of groupoids, and the action of \(p\) on the sheets of a Bely\u{\i} map \(\gamma\) is identical to the action of \(p^{\circlearrowleft}\).
\label{groupoidhom}
\end{lem}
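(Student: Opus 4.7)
The plan is to verify three things: that $\cdot^{\circlearrowleft}$ is well-defined and a groupoid homomorphism, that it is surjective, and that the action on sheets agrees. Since $\alpha_{p, j}$ depends only on the endpoint $p(j)$, the assignment descends to path-homotopy classes. For composable paths $p_{1}, p_{2}$ with $p_{1}(1) = p_{2}(0) = q$, the arcs $\alpha_{p_{1}, 1}$ and $\alpha_{p_{2}, 0}$ are identically defined (both arise from $q$ alone), so
\[
p_{1}^{\circlearrowleft} \ast p_{2}^{\circlearrowleft} = \alpha_{p_{1}, 0}^{-1} \ast p_{1} \ast (\alpha_{p_{1}, 1} \ast \alpha_{p_{2}, 0}^{-1}) \ast p_{2} \ast \alpha_{p_{2}, 1},
\]
and the parenthesized factor has the form $\alpha \ast \alpha^{-1}$, hence is null-homotopic. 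Cancelling it yields $p_{1}^{\circlearrowleft} \ast p_{2}^{\circlearrowleft} \simeq_{p} (p_{1} \ast p_{2})^{\circlearrowleft}$, which is the groupoid homomorphism property.

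Surjectivity is an immediate calculation: for a loop $\ell$ based at $1/2$, both endpoints equal $1/2 \in (0, 1)$, so the ``else'' case of Construction \ref{loopconstruction} applies and produces $\alpha_{\ell, j}(t) = (1 - t)(1/2) + (1/2) t = 1/2$, the constant path at the basepoint. Thus $\ell^{\circlearrowleft} \simeq_{p} \ell$, showing that every loop at $1/2$ lies in the image.

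The equality of sheet actions reduces to showing that the auxiliary arcs $\alpha_{p, 0}^{-1}$ and $\alpha_{p, 1}$ each lift within a single sheet of $\gamma$: granted this, concatenating their lifts with a lift of $p$ produces a lift of $p^{\circlearrowleft}$ whose terminal sheet coincides with the terminal sheet of $p$'s lift. To see the sheet-invariance, observe that the straight-line subcase produces a segment from $1/2$ to $p(j)$ which, for $p(j) \notin (-\infty, 0) \cup (1, \infty)$, meets the real axis only at $1/2$ (and at $p(j)$ itself when $p(j)$ is real), hence never crosses the cuts $(-\infty, 0)$ or $(1, \infty)$; the circular-arc subcase is designed to lie in $\overline{\mathbb{H}}$ when $p(j) \in (-\infty, 0)$ and in the closed lower half-plane when $p(j) \in (1, \infty)$, meeting its target cut only at the endpoint. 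In either subcase the interior of the arc sits in a simply connected region of $\mathbb{P}^{1}(\mathbb{C})_{*}$ whose preimage under $\gamma$ partitions into the sheets of Convention \ref{sheetconv}, so the lift is forced to remain in a single one.

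The principal obstacle is this last geometric step, specifically in the arc subcase: one must confirm that the designated approach direction (upper half-plane for $p(j) \in (-\infty, 0)$, lower for $p(j) \in (1, \infty)$) correctly routes the lift so that it terminates in the starting sheet $\mathcal{S}_{i} = (B_{i} \cup \mathscr{T}_{i}) \setminus \{0, 1, \infty\}$. The upper-half approach carries the lift into $\mathscr{T}_{i}$, while the lower approach keeps the interior of the arc inside $B_{i}$ with its endpoint reaching $\mathcal{S}_{i}$ through closure, and both claims rely on careful bookkeeping with the decomposition $B_{i} \sqcup \mathscr{T}_{i}$ from Convention \ref{sheetconv}.
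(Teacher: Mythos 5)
Your well-definedness and homomorphism arguments are exactly the paper's (the cancellation of \(\alpha_{p_{1},1} \ast \alpha_{p_{2},0}^{-1}\) because \(\alpha_{p,j}\) depends only on \(p(j)\)), and your surjectivity check --- that for a loop \(\ell\) at \(1/2\) both auxiliary arcs degenerate to the constant path, so \(\ell^{\circlearrowleft} \simeq_{p} \ell\) --- is a worthwhile addition that the paper's proof leaves implicit. The problem is in the part you yourself identify as the crux. Your geometric description of the circular-arc case is wrong: you assert the arc lies in the closed lower half-plane when \(p(j) \in (1, \infty)\). From Construction \ref{loopconstruction}, for such \(p(j)\) one has \(s_{j} = -\sgn(\re p(j)) = -1\) while the ``radius'' \(\tfrac{1/2 - p(j)}{2}\) is \emph{negative}; the two signs cancel and the arc traced is \(\big\{\tfrac{1/2+p(j)}{2} + \lvert\tfrac{1/2-p(j)}{2}\rvert e^{i\psi} : \psi \in [0,\pi]\big\}\), which lies in \(\overline{\mathbb{H}^{+}}\). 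The paper states this explicitly: both arcs pass ``through \(\mathbb{H}^{+}\)''. This is not a cosmetic slip, because the conclusion genuinely depends on the side from which the cut is approached.

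Concretely, your fallback claim that ``the lower approach keeps the interior of the arc inside \(B_{i}\) with its endpoint reaching \(\mathcal{S}_{i}\) through closure'' is exactly what Convention \ref{sheetconv} forbids. A preimage of a point of \((1,\infty)\) lies in \emph{no} \(B_{i}\) and in exactly one \(\mathscr{T}_{j}\), namely the dark triangle whose boundary it sits on --- the one reached by approaching the cut from \(\mathbb{H}^{+}\). The closure of \(B_{i}\) along the cut from the lower (light) triangle side lands on the boundary of some \emph{other} sheet's dark triangle, so a lift of a lower-half-plane arc would terminate in a different sheet and the asserted equality of actions would fail (the action would differ by a transposition across the cut). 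The correct argument is uniform and shorter than what you wrote: in every case the interior of \(\alpha_{p,j}\) avoids \([-\infty,0] \cup [1,\infty]\) and, when the endpoint does lie on a cut, the approach is through \(\mathbb{H}^{+}\); hence the lift beginning at the edge point of \(\mathcal{S}_{i}\) over \(1/2\) stays in \(B_{i} \cup \mathscr{T}_{i}\) and ends in \(\mathscr{T}_{i} \subseteq \mathcal{S}_{i}\), which is precisely the paper's one-line justification that \(\alpha_{p,j}\) ``does not cross \(\mathbb{P}^{1}(\mathbb{R})\).'' You should also note that your parenthetical about the straight segment meeting the real axis only at \(1/2\) fails when \(p(j) \in (0,1)\) (the whole segment is then real), though there the segment still stays inside \(\mathcal{R}_{1/2}\), which is all that is needed.
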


\begin{proof}
First, if \(H\!\!: p_{1} \simeq_{p} p_{2}\), then because \(\alpha_{p_{j},i}\), \(j = 0, 1\), depends only on \(p_{j}(i)\), \(\id \ast \,H\! \ast \id\) is a path homotopy between \(p_{1}^{\circlearrowleft}\) and \(p_{2}^{\circlearrowleft}\), so that \(\cdot^{\circlearrowleft}\) is well-defined.  Let \(p_{1}, p_{2}\) be paths in \(\mathbb{P}^{1}(\mathbb{C})_{*}\) with \(p_{1}(1) = p_{2}(0)\).  Again because \(\alpha_{p_{j},i}\) depends only on \(p_{j}(i)\), \(\alpha_{p_{1},1} = \alpha_{p_{2},0}\) and \((p_{1}p_{2})^{\circlearrowleft} \simeq_{p} p_{1}^{\circlearrowleft}p_{2}^{\circlearrowleft}\).  Finally, because \(\alpha_{p_{j},i}\) does not cross \(\mathbb{P}^{1}(\mathbb{R})\), \(p^{\circlearrowleft}(i)\) lies in the same sheet as \(p(i)\) and the action of \(p^{\circlearrowleft}\) is the same as \(p\).
\end{proof}

As \(p^{\circlearrowleft} \in \pi_{1}(\mathbb{P}^{1}(\mathbb{C})_{*})\), it has an image \(\sigma_{p^{\circlearrowleft}}\) under the monodromy representation of \(\gamma\).  By the preceding lemma, the action of complicated paths can be constructed from the action of simpler paths.  To this end, it is useful to consider the following regions of \(\mathbb{P}^{1}(\mathbb{C})_{*}\), containing certain segments of \(\mathbb{P}^{1}(\mathbb{R})\), in order to describe crossing of \(\mathbb{P}^{1}(\mathbb{R})\):
\begin{equation*}
\begin{gathered}
    \mathcal{R}_{-1/2} \coloneqq \mathbb{P}^{1}(\mathbb{C})\backslash[0, \infty], \quad
    \mathcal{R}_{1/2} \coloneqq \mathbb{P}^{1}(\mathbb{C})\backslash([-\infty, 0] \cup [1, \infty]),\\
    {\centering \mathcal{R}_{3/2} \coloneqq \mathbb{P}^{1}(\mathbb{C})\backslash[-\infty, 1],}
\end{gathered}
\end{equation*}
where the subscript indicates the unique real point in the region lying on either \(e^{2\pi it}/2\) or \(1 - e^{2\pi it}/2\), \(t \in [0, 1]\).  More complicated paths can then be partitioned into finitely many paths, each lying in one of these regions, by the Heine-Borel theorem.  In particular, the following basic cases serve as building blocks in the determination of \(\sigma_{p^{\circlearrowleft}}\) for more complex paths \(p\) in \(\mathbb{P}^{1}(\mathbb{C})_{*}\), with an example of each case being illustrated in Figure \ref{pathloop}.

\begin{desc}\hspace{0pt}\vspace{-\baselineskip}\newline
\begin{enumerate}
    \item If \(p \subseteq \mathscr{R}_{1/2}\), then \(p^{{\circlearrowleft}} \simeq_{p} 1\).
    \item If either \(p(0), p(1) \in \overline{\mathbb{H}^{+}}\) or \(p(0), p(1) \in \mathbb{H}^{-}\) and either \(p \subseteq \mathscr{R}_{-1/2}\) or \(p \subseteq \mathscr{R}_{3/2}\), then \(p^{{\circlearrowleft}} \simeq_{p} 1\).
    \item If \(p(0) \in \overline{\mathbb{H}^{+}}\), \(p(1) \in \mathbb{H}^{-}\), and \(p \subseteq \mathscr{R}_{-1/2}\), then \(p^{{\circlearrowleft}} \simeq_{p} e^{2\pi it}/2\).\label{upperlowerinf0}
    \item If \(p(0) \in \overline{\mathbb{H}^{+}}\), \(p(1) \in \mathbb{H}^{-}\), and \(p \subseteq \mathscr{R}_{3/2}\), then \(p^{{\circlearrowleft}} \simeq_{p} 1 - e^{-2\pi it}/2\).\label{upperlower1inf}
    \item If \(p(0) \in \mathbb{H}^{-}\), \(p(1) \in \overline{\mathbb{H}^{+}}\), and \(p \subseteq \mathscr{R}_{-1/2}\), then \(p^{{\circlearrowleft}} \simeq_{p} e^{-2\pi it}/2\).\label{lowerupperinf0}
    \item If \(p(0) \in \mathbb{H}^{-}\), \(p(1) \in \overline{\mathbb{H}^{+}}\), and \(p \subseteq \mathscr{R}_{3/2}\), then \(p^{{\circlearrowleft}} \simeq_{p} 1 - e^{2\pi it}/2\).\label{lowerupper1inf}
\end{enumerate}
\label{casebreakdown}
\end{desc}

\stepcounter{thm}
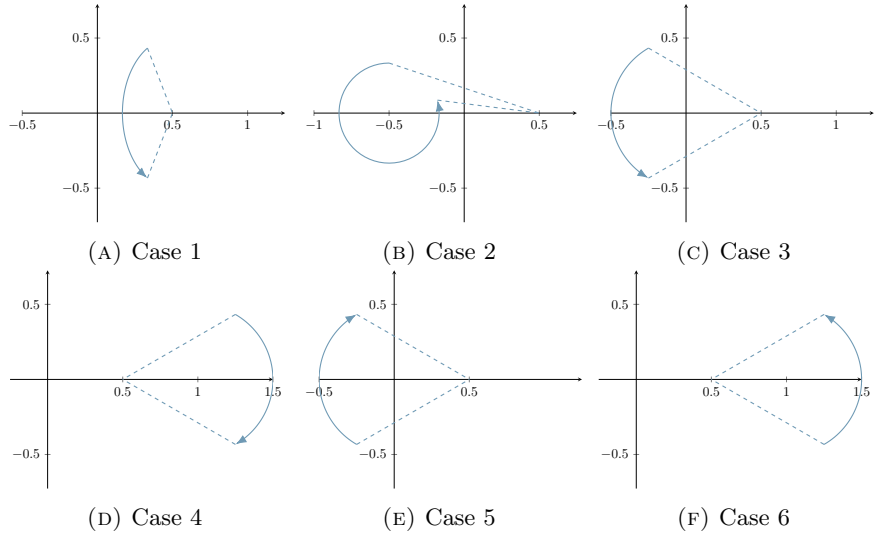
\begin{figure}[ht]
\centering
\begin{subfigure}{0.3\linewidth}
\centering
\scalebox{0.51}{\begin{tikzpicture}
\begin{axis}[xmin=-1/2, xmax=5/4,
ymin=-3/5, ymax=3/5, trig format plots=rad, axis equal, axis lines=middle]
\addplot [domain=1/3:2/3, samples=50, thick, slayter, -{Latex[scale=1.25]}] ({1/2+cos(2*pi*x)/3}, {sin(2*pi*x)/2});
\addplot [domain=0:1, samples=50, thick, dashed, slayter] ({(1-x)*(1/2+cos(2*pi*1/3)/3)+x*1/2}, {(1-x)*sin(2*pi*1/3)/2});
\addplot [domain=0:1, samples=50, thick, dashed, slayter] ({(1-x)*(1/2+cos(2*pi*2/3)/3)+x*1/2}, {(1-x)*sin(2*pi*2/3)/2});
\end{axis}
\end{tikzpicture}}
\caption{Case \(1\)}
\end{subfigure}
\begin{subfigure}{0.3\linewidth}
\centering
\scalebox{0.51}{\begin{tikzpicture}
\begin{axis}[xmin=-1, xmax=3/4,
ymin=-3/5, ymax=3/5, trig format plots=rad, axis equal, axis lines=middle]
\addplot [domain=1/4:25/24, samples=50, thick, slayter, -{Latex[scale=1.25]}] ({-1/2+cos(2*pi*x)/3}, {sin(2*pi*x)/3});
\addplot [domain=0:1, samples=50, thick, dashed, slayter] ({(1-x)*(cos(2*pi*1/4)/3-1/2)+x*1/2}, {(1-x)*sin(2*pi*1/4)/3});
\addplot [domain=0:1, samples=50, thick, dashed, slayter] ({(1-x)*(cos(2*pi*25/24)/3-1/2)+x*1/2}, {(1-x)*sin(2*pi*25/24)/3});
\end{axis}
\end{tikzpicture}}
\caption{Case \(2\)}
\end{subfigure}
\begin{subfigure}{0.3\linewidth}
\centering
\scalebox{0.51}{\begin{tikzpicture}
\begin{axis}[xmin=-1/2, xmax=5/4,
ymin=-3/5, ymax=3/5, trig format plots=rad, axis equal, axis lines=middle]
\addplot [domain=1/3:2/3, samples=50, thick, slayter, -{Latex[scale=1.25]}] ({cos(2*pi*x)/2}, {sin(2*pi*x)/2});
\addplot [domain=0:1, samples=50, thick, dashed, slayter] ({(1-x)*cos(2*pi*1/3)/2+x*1/2}, {(1-x)*sin(2*pi*1/3)/2});
\addplot [domain=0:1, samples=50, thick, dashed, slayter] ({(1-x)*cos(2*pi*2/3)/2+x*1/2}, {(1-x)*sin(2*pi*2/3)/2});
\end{axis}
\end{tikzpicture}}
\caption{Case \(3\)}
\end{subfigure}
\begin{subfigure}[b]{0.3\linewidth}
\centering
\scalebox{0.51}{
\begin{tikzpicture}
\begin{axis}[xmin=-1/4, xmax=3/2,
ymin=-3/5, ymax=3/5, trig format plots=rad, axis equal, axis lines=middle]
\addplot [domain=1/3:2/3, samples=50, thick, slayter, -{Latex[scale=1.25]}] ({1-cos(2*pi*x)/2}, {sin(2*pi*x)/2});
\addplot [domain=0:1, samples=50, thick, dashed, slayter] ({(1-x)*(1-cos(2*pi*1/3)/2)+x*1/2}, {(1-x)*sin(2*pi*1/3)/2});
\addplot [domain=0:1, samples=50, thick, dashed, slayter] ({(1-x)*(1-cos(2*pi*2/3)/2)+x*1/2}, {(1-x)*sin(2*pi*2/3)/2});
\end{axis}
\end{tikzpicture}}
\caption{Case \(4\)}
\end{subfigure}
\begin{subfigure}[b]{0.3\linewidth}
\centering
\scalebox{0.51}{
\begin{tikzpicture}
\begin{axis}[xmin=-1/2, xmax=5/4,
ymin=-3/5, ymax=3/5, xtick={-0.5, 0.5}, ytick={-0.5, 0.5}, trig format plots=rad, axis equal, axis lines=middle]
\addplot [domain=1/3:2/3, samples=50, thick, slayter, -{Latex[scale=1.25]}] ({cos(-2*pi*x)/2}, {sin(-2*pi*x)/2});
\addplot [domain=0:1, samples=50, thick, dashed, slayter] ({(1-x)*cos(2*pi*1/3)/2+x*1/2}, {(1-x)*sin(2*pi*1/3)/2});
\addplot [domain=0:1, samples=50, thick, dashed, slayter] ({(1-x)*cos(2*pi*2/3)/2+x*1/2}, {(1-x)*sin(2*pi*2/3)/2});
\end{axis}
\end{tikzpicture}}
\caption{Case \(5\)}
\end{subfigure}
\begin{subfigure}[b]{0.3\linewidth}
\centering
\scalebox{0.51}{
\begin{tikzpicture}
\begin{axis}[xmin=-1/4, xmax=3/2,
ymin=-3/5, ymax=3/5, trig format plots=rad, axis equal, axis lines=middle]
\addplot [domain=1/3:2/3, samples=50, thick, slayter, -{Latex[scale=1.25]}] ({1-cos(-2*pi*x)/2}, {sin(-2*pi*x)/2});
\addplot [domain=0:1, samples=50, thick, dashed, slayter] ({(1-x)*(1-cos(2*pi*1/3)/2)+x*1/2}, {(1-x)*sin(2*pi*1/3)/2});
\addplot [domain=0:1, samples=50, thick, dashed, slayter] ({(1-x)*(1-cos(2*pi*2/3)/2)+x*1/2}, {(1-x)*sin(2*pi*2/3)/2});
\end{axis}
\end{tikzpicture}}
\caption{Case \(6\)}
\end{subfigure}
\caption{The path-homotopy classes of extensions of basic paths}
\label{pathloop}
\end{figure}

\subsection{Action of Loops on Edges}\hspace*{\fill}

Now that the identification of a path \(p \subseteq \mathbb{P}^{1}(\mathbb{C})_{*}\) with an element \(p^{\circlearrowleft} \in \pi_{1}(\mathbb{P}^{1}(\mathbb{C})_{*})\) has been made, it is possible to determine, for a given \(\beta\), how a loop \(\lambda \in \pi_{1}(\mathbb{P}^{1}(\mathbb{C})_{*})\) would permute the edges of an arbitrary Bely\u{\i} map \(\beta \circ \gamma\).  At this point, the extending pattern of a Bely\u{\i} map \(\beta\) (cf.\ \cite[Section 3.2]{wood}) is introduced in order to describe the action of a loop \(\lambda\) on an edge \((e_{\gamma}, e_{\beta})\).

\begin{defnnum}
Let \(\beta: Y \rightarrow Z\) be a dynamical Bely\u{\i} map.
\begin{enumerate}
    \item The extending pattern of \(\beta\) from \(\lambda \in \pi_{1}(Z_{*}, 1/2)\) is the function \(f_{\lambda}: E_{\beta} \rightarrow \pi_{1}(Y_{*}, 1/2)\) defined as follows.  For \(e_{\beta} \in E_{\beta}\), lift \(\lambda\) by \(\beta\) to \(\lambda_{Y}\) with \(\lambda_{Y}(0) = e_{\beta}(1/2)\) and form \(\lambda_{Y}^{\circlearrowleft}\) following Construction \ref{loopconstruction}.  Then
\[f_{\lambda}(e_{\beta}) \coloneqq \lambda_{Y}^{\circlearrowleft} \in \pi_{1}(Y_{*}, 1/2).\]
    \item The extending pattern of \(\beta\) is the pair \((f_{0}, f_{1}) \coloneqq (f_{\lambda_{0}}, f_{\lambda_{1}})\), where \(\lambda_{0}\), respectively \(\lambda_{1}\), is a loop with winding number one around \(0\), respectively \(1\), and winding number zero around both \(1\) and \(\infty\), respectively \(0\) and \(\infty\).
\end{enumerate}
\label{extenddefn}
\end{defnnum}

\stepcounter{thm}
\begin{figure}[ht]
	\centering
        \includegraphics[scale=0.5]{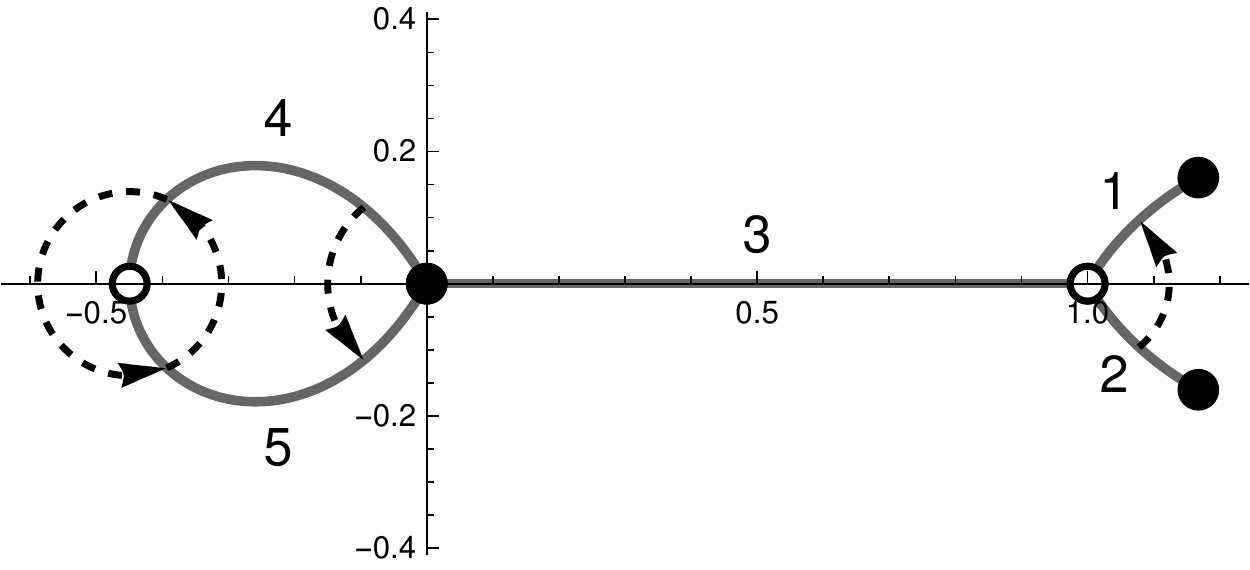}
        \caption{The extending pattern of a Bely\u{\i} map}
        \label{paths}
\end{figure}

\bigskip
\begin{example}
\label{extendingexample}
Define \(\lambda_{0} \coloneqq e^{2\pi i t}/2\) and \(\lambda_{1} \coloneqq 1 - e^{2\pi i t}/2\), and let their homotopy classes be denoted by \(a\) and \(b\), respectively.  An example of determining the extending pattern is shown in Figure \ref{paths}.  The only lifting of \(\lambda_{0}\) which crosses \(\mathbb{P}^{1}(\mathbb{R})\) outside of \((0, 1)\) is the lifting beginning at Edge \(4\).  As the lifting crosses \((-\infty, 0)\) from \(\overline{\mathbb{H}^{+}}\) to \(\mathbb{H}^{-}\), Description \ref{casebreakdown} prescribes that \(f_{0}(4) = a\), while \(f_{0}(i) = 1\) for \(i \not= 4\).  On the other hand, liftings of \(\lambda_{1}\) beginning at Edge \(4\) and Edge \(5\) cross \((-\infty, 0)\) in opposite directions, so that \(f_{1}(4) = a\) and \(f_{1}(5) = a^{-1}\).  Further, a lifting of \(\lambda_{1}\) beginning at Edge \(2\) crosses \((1, \infty)\) from \(\mathbb{H}^{-}\) to \(\overline{\mathbb{H}^{+}}\), hence \(f_{1}(2) = b\).  Finally, \(f_{1}(1) = f_{1}(3) = 1\).
\end{example}

\bigskip
\begin{example}
For \(\beta\) defined in \eqref{betadef} and shown in Figure \ref{bijectionfig}(A), the extending pattern is \(f_{0} = (1, 1, b), f_{1} = (a, 1, 1)\) because the path from Edge \(3\) to Edge \(1\) crosses \((1, \infty)\) from \(\mathbb{H}^{-}\) to \(\overline{\mathbb{H}^{+}}\), while the loop around the white vertex of Edge \(1\) crosses \((-\infty, 0)\) from \(\overline{\mathbb{H}^{+}}\) to \(\mathbb{H}^{-}\).
\label{extendingexample2}
\end{example}

\begin{thm}
Let \(\beta: Y \rightarrow Z\) be a dynamical Bely\u{\i} map.  For any Bely\u{\i} map \(\gamma: X \rightarrow Y\), the action of the loop \(\lambda \in \pi_{1}(Z_{*}, 1/2)\) on the edge \((e_{\gamma}, e_{\beta}) \in E_{\beta \circ \gamma}\) is given by
\begin{equation}
(e_{\gamma}, e_{\beta})^{\lambda} = \Big(e_{\gamma}^{f_{\lambda}(e_{\beta})}, e_{\beta}^{\lambda}\Big),
\label{extendedaction}
\end{equation}
where each action is the monodromy action of the respective Bely\u{\i} map.
\label{actiononbetagamma}
\end{thm}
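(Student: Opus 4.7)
The plan is to unwind the bijection $E_{\beta \circ \gamma} \leftrightarrow E_\gamma \times E_\beta$ of Proposition \ref{edgebijection} via path lifting through the intermediate cover, using Lemma \ref{groupoidhom} to translate path endpoints into sheet and edge data. Fix an edge $e \in E_{\beta \circ \gamma}$ corresponding to $(e_\gamma, e_\beta)$, so that $\gamma \circ e = e_\beta$ and $e(1/2)$ lies in the sheet $\mathcal{S}_{e_\gamma}$ of $\gamma$. Lift $\lambda$ by $\beta \circ \gamma$ to a path $\tilde\lambda$ in $X_*$ beginning at $e(1/2)$; by definition of the monodromy action, $e^\lambda$ is the unique edge of $\beta \circ \gamma$ whose midpoint is $\tilde\lambda(1)$. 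The task is to read off both coordinates of $e^\lambda$.

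The second coordinate is immediate. The projection $\gamma \circ \tilde\lambda$ satisfies $\beta \circ (\gamma \circ \tilde\lambda) = \lambda$ and starts at $\gamma(e(1/2)) = e_\beta(1/2)$, so by uniqueness of path lifting it coincides with the path $\lambda_Y$ appearing in Definition \ref{extenddefn}. In particular $\gamma(\tilde\lambda(1)) = \lambda_Y(1) = e_\beta^\lambda(1/2)$, giving $\gamma \circ e^\lambda = e_\beta^\lambda$. For the first coordinate, I regard $\tilde\lambda$ as a lift of $\lambda_Y$ by $\gamma$. The path $\lambda_Y$ in $Y_*$ therefore carries the sheet of $\gamma$ containing its initial point $e(1/2)$, namely $\mathcal{S}_{e_\gamma}$, onto the sheet of $\gamma$ containing $\tilde\lambda(1) = e^\lambda(1/2)$, which is $\mathcal{S}_{(e^\lambda)_\gamma}$. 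By Lemma \ref{groupoidhom}, this sheet permutation is the same as the one induced by the loop $\lambda_Y^\circlearrowleft \in \pi_1(Y_*, 1/2)$, and by Definition \ref{extenddefn} this loop is precisely $f_\lambda(e_\beta)$. Translating from sheets back to edges via the bijection $E_\gamma \leftrightarrow \{\mathcal{S}_i\}$ yields $(e^\lambda)_\gamma = e_\gamma^{f_\lambda(e_\beta)}$, as required.

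The main subtlety to handle carefully is the identification $\gamma \circ \tilde\lambda = \lambda_Y$: it rests on uniqueness of path lifting plus the matching of initial points, and it is what allows the single lifted path $\tilde\lambda$ to simultaneously play the role of a lift of $\lambda$ by $\beta \circ \gamma$ (witnessing the action on $E_{\beta\circ\gamma}$) and a lift of $\lambda_Y$ by $\gamma$ (witnessing the sheet transition of $\gamma$). Everything else is transport between equivalent descriptions of the same action: paths versus loops via Construction \ref{loopconstruction}, and sheets of $\gamma$ versus edges of $\gamma$ via Convention \ref{sheetconv}.
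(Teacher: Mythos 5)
Your proposal is correct and follows essentially the same route as the paper's proof: lift \(\lambda\) by \(\beta \circ \gamma\) to a path based at \(e(1/2)\), identify its image under \(\gamma\) with \(\lambda_{Y}\) by uniqueness of path lifting (which yields the second coordinate), and then invoke Lemma \ref{groupoidhom} to equate the sheet action of \(\lambda_{Y}\) with that of \(\lambda_{Y}^{\circlearrowleft} = f_{\lambda}(e_{\beta})\) for the first coordinate. No substantive differences to report.
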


\begin{proof}
Let \(\lambda \in \pi_{1}(Z_{*}, 1/2)\), let \(e = (e_{\gamma}, e_{\beta}) \in E_{\beta \circ \gamma}\), let \(\lambda_{Y}\) be as in the statement of the theorem, and let \(\lambda_{Y}^{\circlearrowleft} = f_{\lambda}(e_{\beta})\).  Further, let \(\lambda_{X} \subseteq X\) be the lifting of \(\lambda\) by \(\beta \circ \gamma\) with \(\lambda_{X}(0) = e(1/2)\).  From Proposition \ref{edgebijection}, to show that the first component of \(e^{\lambda}\) is \(e_{\gamma}^{\lambda_{Y}^{\circlearrowleft}}\), it is necessary to show that \(e_{\gamma}^{\lambda_{Y}^{\circlearrowleft}}\) is the edge of \(\gamma\) lying in the sheet containing \(e^{\lambda}(1/2) = \lambda_{X}(1)\).

By Proposition \ref{edgebijection}, \(\gamma(e) = e_{\beta}\), so that \(\gamma\big(\lambda_{X}(0)\big) = e_{\beta}(1/2) = \lambda_{Y}(0)\).  Moreover, \(\beta(\gamma \circ \lambda_{X}) = \lambda\) shows that \(\gamma \circ \lambda_{X}\) is a lifting of \(\lambda\) by \(\beta\) and it follows that \(\gamma(\lambda_{X}) = \lambda_{Y}\).  Then \(\lambda_{Y}\) acts on the sheets of \(\gamma\) by
\[\lambda_{Y}: \begin{tabular}{c}
\textrm{sheet of \(\gamma\)}\\
\textrm{containing \(\lambda_{X}(0)\)}
\end{tabular} \longmapsto \begin{tabular}{c}
\textrm{sheet of \(\gamma\)}\\
\textrm{containing \(\lambda_{X}(1)\)}
\end{tabular}.\]
By Lemma \ref{groupoidhom}, \(\lambda_{Y}^{\circlearrowleft}\) permutes the sheets, hence edges, of \(\gamma\) in the same way and \(e_{\gamma}^{\lambda_{Y}^{\circlearrowleft}}\) lies in the sheet containing \(\lambda_{X}(1)\).

Finally, because \(\gamma(\lambda_{X}) = \lambda_{Y}\),
\[\gamma\left(e^{\lambda}\bigg(\frac{1}{2}\bigg)\right) = \gamma\bigg(\lambda_{X}(1)\bigg) = \lambda_{Y}(1) = e_{\beta}^{\lambda}\Big(\frac{1}{2}\Big),\]
and the second component of \(e^{\lambda}\) is given by \(e_{\beta}^{\lambda}\), completing the result.
\end{proof}

\stepcounter{thm}
\begin{figure}[ht]
    \centering
    \subcaptionbox{A loop \(\lambda\) around \(0\) on the Bely\u{\i} map \(f(x) = x\)}{%
        \includegraphics[width=0.475\textwidth]{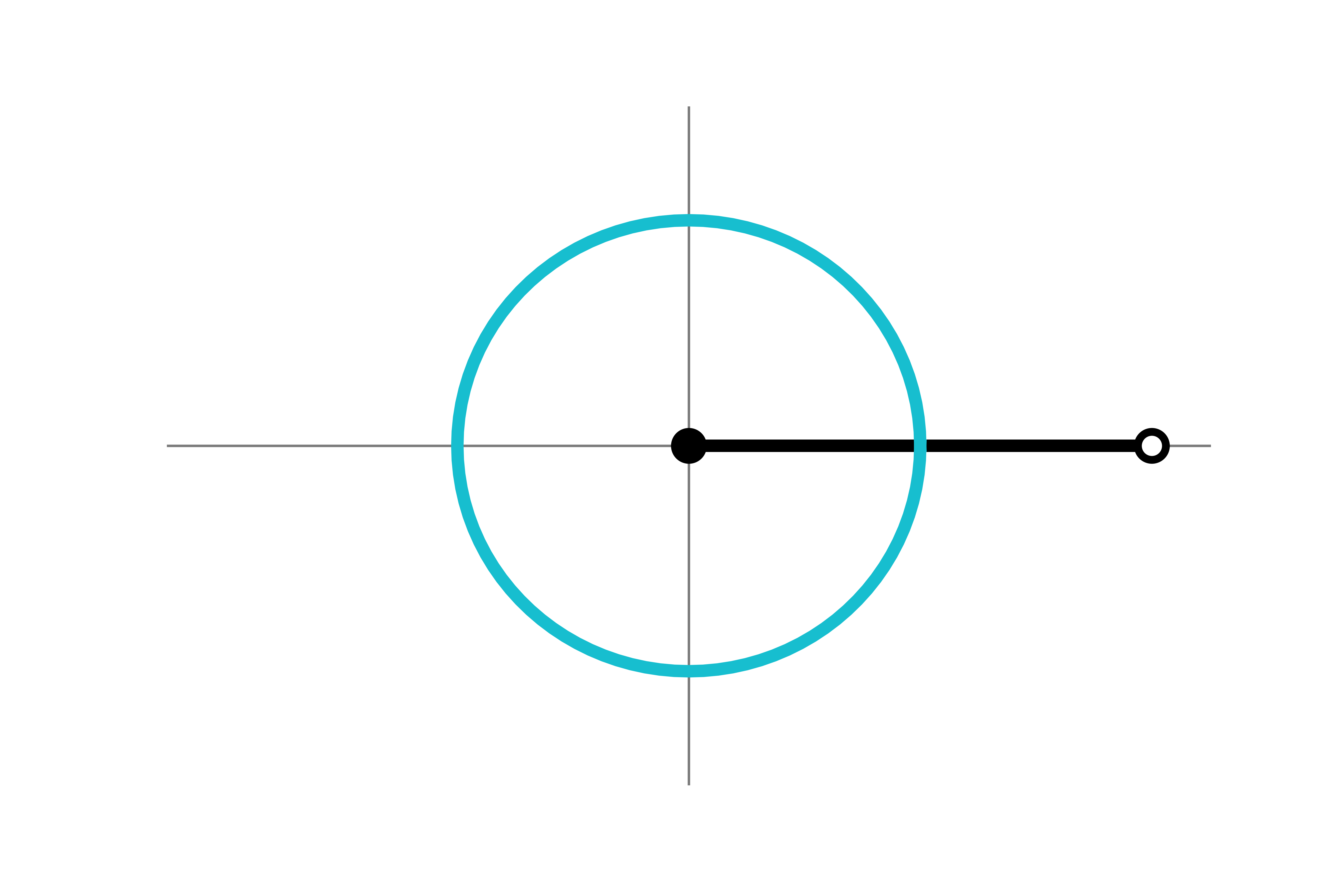}}\hspace{0.01\textwidth}%
    \subcaptionbox{A lifting \(\lambda_{Y}\) of \(\lambda\) and the creation of a loop \(\lambda_{Y}^{\circlearrowleft}\)}{%
    \includegraphics[width=0.475\textwidth]{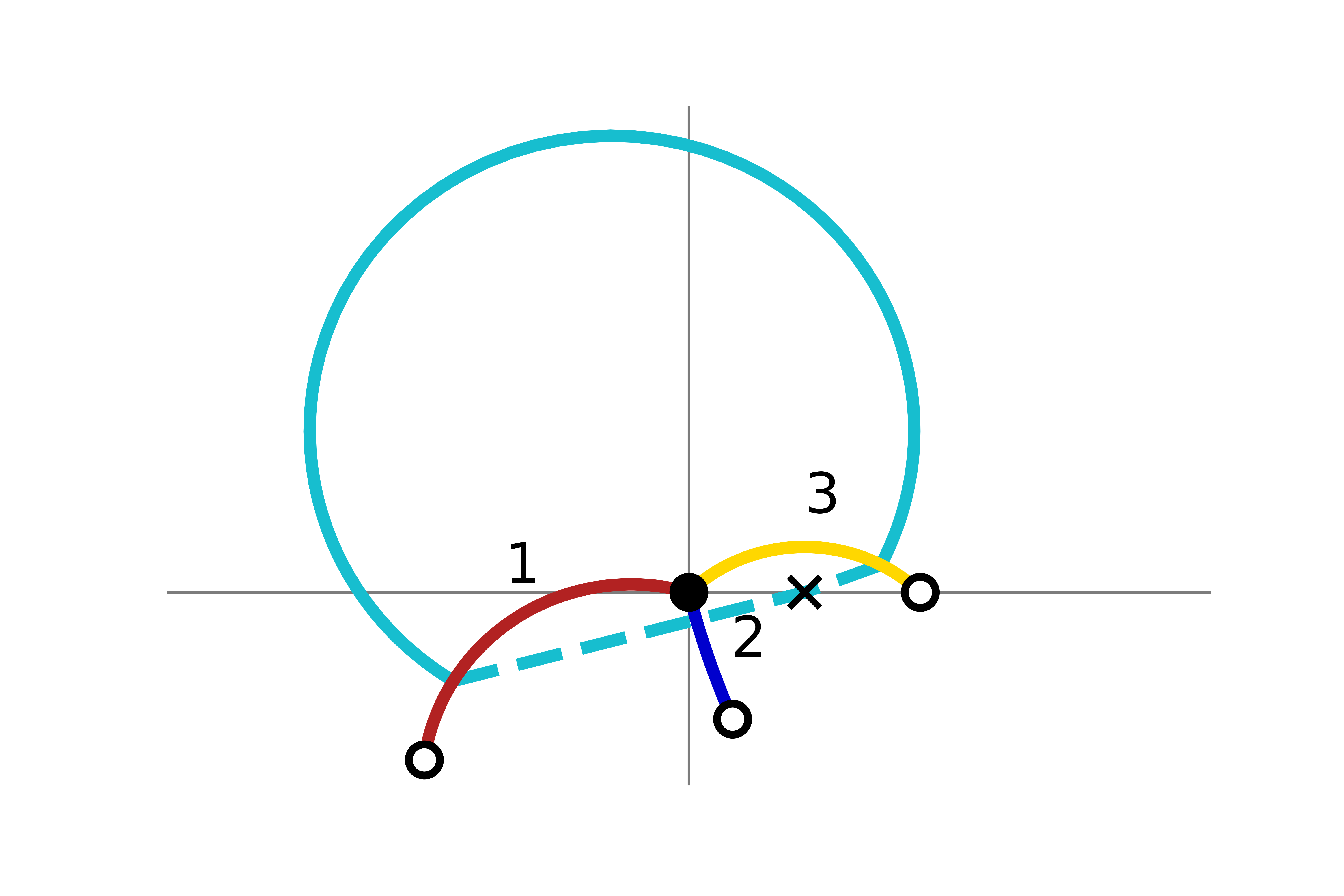}}\\\medskip%
    \subcaptionbox{A lifting \(\lambda_{X}\) of \(\lambda_{Y}\) and a lifting of \(\lambda_{Y}^{\circlearrowleft}\)}{%
    \includegraphics[width=0.75\textwidth]{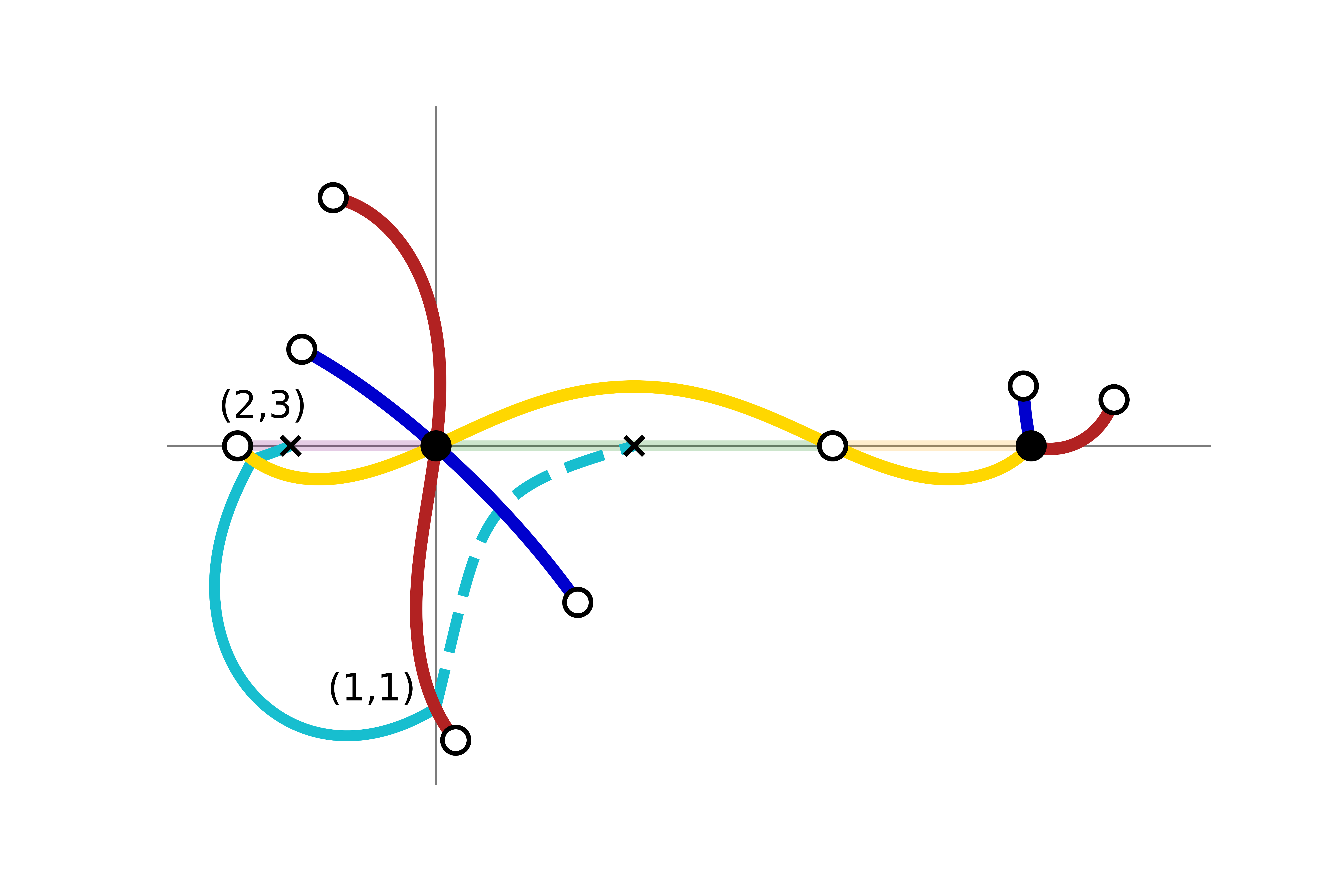}}%
\caption{Lifting a loop in \(Z\) by \(\beta \circ \gamma\) determines a path between edges of \(\gamma\)}
    \label{action}
\end{figure}

\begin{example}
Theorem \ref{actiononbetagamma} is illustrated in Figure \ref{action}.  The loop \(\lambda \subset Z\) around \(1\) is lifted to \(\lambda_{Y} \subset Y\), spanning between Edge \(2\) and Edge \(1\) of \(\beta\).  Next, \(\lambda_{Y}\) is used to form \(\lambda_{Y}^{\circlearrowleft}\) according to Construction \ref{loopconstruction}, as represented by the dashed segments.   Being a loop in \(Y\) based at \(1/2\), \(\lambda_{Y}^{\circlearrowleft}\), which includes \(\lambda_{Y}\) as a subset, is lifted by \(\gamma\) to a path between edges of \(\gamma\), which simultaneously lifts \(\lambda_{Y}\) to a path \(\lambda_{X} \subset X\) between edges of \(\beta \circ \gamma\).  The path \(\lambda_{X}\) demonstrates the action of \(\lambda_{Y}\) on \(E_{\beta \circ \gamma}\), while the lifting of \(\lambda_{Y}^{\circlearrowleft}\) creates a correspondence between the endpoints of \(\lambda_{X}\) and edges of \(\gamma\).  In the notation of Theorem \ref{actiononbetagamma}, let \(\lambda_{1} \subset Z\) be a loop around \(1\) and let \(b \subset Y\) also be a loop around \(1\).  Since \(f_{\lambda_{1}} = (1, 1, b)\), where \(b\) acts on the edges of \(\gamma\) by \((1\;3) \in \Mon \gamma\),
\[(1, 3)^{\lambda_{1}} = (1^{f_{\lambda_{1}}(3)}, 3^{\lambda_{1}}) = (1^{b}, 1) = (3, 1).\]
\end{example}

\bigskip
From \eqref{extendedaction}, it becomes clear that the action of a loop \(\lambda\) on an edge \(e \in E_{\beta \circ \gamma}\) can be determined from the monodromy of \(\beta\), the monodromy of \(\gamma\), and \(f_{\lambda}\).  Perhaps the most interesting observation from Theorem \ref{actiononbetagamma} is that \(f_{\lambda}\) is independent of \(\gamma\).  Thus, the effect of composition with the Bely\u{\i} map \(\beta\) on the monodromy of any Bely\u{\i} map \(\gamma\) can be established once and for all by determining \(f_{\lambda}\) for loops \(\lambda_{0}\) and \(\lambda_{1}\) around \(0\) and \(1\), respectively, then specializing to the function \(\gamma\) at hand.

\section{Monodromy as a Wreath Product}
\label{monwreath}

Theorem \ref{actiononbetagamma} enables the development of the group structure of \(\Mon \beta \circ \gamma\) as a wreath product.\ \cite[Prop. 1.7.10]{landozvonkin}  The fact that \(\Mon \beta \circ \gamma\) is a subgroup of \(\Mon \gamma \wr_{E_{\beta}} \Mon \beta\) begins to appear in \eqref{extendedaction}, in which the action of \(\lambda\) on \(e_{\gamma}\) is a function of \(e_{\beta}\).  As the codomain of \(f_{\lambda}\) is \(\pi_{1}^{Z}\), rather than \(\Mon \gamma\), \(\pi_{1}^{Z} \wr_{E_{\beta}} \Mon \beta\) will be considered first, reflecting the independence from \(\gamma\) of the effect of composition with \(\beta\) on the monodromy action.  Then \(\pi_{1}^{Z} \wr_{E_{\beta}} \Mon \beta\) will be mapped onto \(\Mon \gamma \wr_{E_{\beta}} \Mon \beta\).

\subsection{The Extended Monodromy Group}\hspace*{\fill}

\begin{thm}
    Let \(\beta: Y \rightarrow Z\) be a dynamical Bely\u{\i} map, let
    \[\pi_{1}^{Y} \coloneqq \pi_{1}(Y_{*}, 1/2), \qquad \pi_{1}^{Z} \coloneqq \pi_{1}(Z_{*}, 1/2),\]
    let \(\rho_{\beta}\) be the monodromy representation of \(\beta\), let \(\tau_{\lambda} \coloneqq \rho_{\beta}(\lambda)\), and let \(f_{\lambda}\) be defined as in Definition \ref{extenddefn}.  Define the action of \(\Mon \beta\) on \(\Fun(E_{\beta}, \pi_{1}^{Y})\) by \(f^{\tau}(e_{\beta}) = f(e_{\beta}^{\tau^{-1}})\).  For \(\lambda \in \pi_{1}^{Z}\), define
    \[\varphi_{\beta}(\lambda) \coloneqq f_{\lambda} \rtimes \tau_{\lambda}.\]
    Then \(\varphi_{\beta}\) is a homomorphism
    \[\varphi_{\beta}: \pi_{1}^{Z} \rightarrow \pi_{1}^{Y} \wr_{E_{\beta}} \Mon \beta = \Fun(E_{\beta}, \pi_{1}^{Y}) \rtimes \Mon \beta\]
    whose image is an extension by \(\Mon \beta\) of
    \[\ker \Big(\proj_{\beta}\raisebox{-0.5ex}{\big|}_{\varphi_{\beta}(\pi_{1}^{Z})}\Big) = \varphi_{\beta}(\ker \rho_{\beta}) \approx \ker \rho_{\beta}/\ker \varphi_{\beta},\]
    where \(\proj_{\beta}: \varphi_{\beta}(\pi_{1}^{Z}) \twoheadrightarrow \Mon_{\beta}\) is restricted to \(\varphi_{\beta}(\pi_{1}^{Z})\).
\label{extendedmon}
\end{thm}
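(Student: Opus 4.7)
The plan is to verify that $\varphi_{\beta}$ respects the semidirect-product multiplication by reducing the fiber identity to the groupoid-homomorphism property of $\cdot^{\circlearrowleft}$ established in Lemma \ref{groupoidhom}, and then to read off the extension structure via the first isomorphism theorem. With the right-action convention $f^{\tau}(e_{\beta}) = f(e_{\beta}^{\tau^{-1}})$, associativity forces the product in $\Fun(E_{\beta}, \pi_{1}^{Y}) \rtimes \Mon \beta$ to be
\[(f_{1}, \tau_{1})(f_{2}, \tau_{2}) = \bigl(f_{1} \cdot f_{2}^{\tau_{1}^{-1}},\; \tau_{1}\tau_{2}\bigr), \qquad (f_{2}^{\tau_{1}^{-1}})(e_{\beta}) = f_{2}(e_{\beta}^{\tau_{1}}).\]
The identity $\varphi_{\beta}(\lambda_{1}\lambda_{2}) = \varphi_{\beta}(\lambda_{1})\varphi_{\beta}(\lambda_{2})$ therefore splits into the second-component equality $\tau_{\lambda_{1}\lambda_{2}} = \tau_{\lambda_{1}}\tau_{\lambda_{2}}$, which is immediate from $\rho_{\beta}$ being a representation, and the pointwise first-component equality
\[f_{\lambda_{1}\lambda_{2}}(e_{\beta}) = f_{\lambda_{1}}(e_{\beta}) \cdot f_{\lambda_{2}}\bigl(e_{\beta}^{\tau_{\lambda_{1}}}\bigr).\]

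For this pointwise identity I would fix $e_{\beta}$ and let $\lambda_{1,Y}$ be the $\beta$-lift of $\lambda_{1}$ starting at $e_{\beta}(1/2)$, which by definition of the monodromy action ends at $e_{\beta}^{\tau_{\lambda_{1}}}(1/2)$. Let $\lambda_{2,Y}$ be the $\beta$-lift of $\lambda_{2}$ starting at that endpoint. Uniqueness of path-lifting identifies the $\beta$-lift of $\lambda_{1}\ast\lambda_{2}$ based at $e_{\beta}(1/2)$ with $\lambda_{1,Y}\ast\lambda_{2,Y}$, and Lemma \ref{groupoidhom} gives
\[(\lambda_{1,Y}\ast\lambda_{2,Y})^{\circlearrowleft} \simeq_{p} \lambda_{1,Y}^{\circlearrowleft}\ast\lambda_{2,Y}^{\circlearrowleft}.\]
Unraveling Definition \ref{extenddefn}, the left-hand side is $f_{\lambda_{1}\lambda_{2}}(e_{\beta})$ while the right-hand side is $f_{\lambda_{1}}(e_{\beta}) \cdot f_{\lambda_{2}}(e_{\beta}^{\tau_{\lambda_{1}}})$, completing the homomorphism check.

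For the extension claim, by construction $\proj_{\beta}\circ\varphi_{\beta} = \rho_{\beta}$, so the restriction $\proj_{\beta}\big|_{\varphi_{\beta}(\pi_{1}^{Z})}$ surjects onto $\Mon \beta$, and an element $\varphi_{\beta}(\lambda) = f_{\lambda} \rtimes \tau_{\lambda}$ lies in its kernel iff $\tau_{\lambda}$ is trivial, i.e.\ iff $\lambda \in \ker \rho_{\beta}$; this identifies $\ker \proj_{\beta}\big|_{\varphi_{\beta}(\pi_{1}^{Z})}$ with $\varphi_{\beta}(\ker \rho_{\beta})$. Since triviality of $\varphi_{\beta}(\lambda)$ forces $\tau_{\lambda} = 1$, $\ker \varphi_{\beta} \subseteq \ker \rho_{\beta}$, and the first isomorphism theorem applied to $\varphi_{\beta}\big|_{\ker \rho_{\beta}}$ yields $\varphi_{\beta}(\ker \rho_{\beta}) \approx \ker \rho_{\beta}/\ker \varphi_{\beta}$. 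The resulting short exact sequence $1 \to \varphi_{\beta}(\ker \rho_{\beta}) \to \varphi_{\beta}(\pi_{1}^{Z}) \to \Mon \beta \to 1$ exhibits $\varphi_{\beta}(\pi_{1}^{Z})$ as the claimed extension. The only real friction I anticipate is bookkeeping: matching the base-point shift $e_{\beta} \mapsto e_{\beta}^{\tau_{\lambda_{1}}}$ coming from path-lifting with the twist $f_{2}^{\tau_{1}^{-1}}$ produced by the right-action semidirect-product multiplication.
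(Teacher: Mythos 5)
Your proposal is correct and follows essentially the same route as the paper's proof: the homomorphism property is reduced to the pointwise identity \(f_{\lambda_{1}\ast\lambda_{2}}(e_{\beta}) = f_{\lambda_{1}}(e_{\beta}) \ast f_{\lambda_{2}}(e_{\beta}^{\tau_{\lambda_{1}}})\) via uniqueness of path lifting and Lemma \ref{groupoidhom}, and the extension structure is read off from \(\proj_{\beta}\circ\varphi_{\beta} = \rho_{\beta}\) together with the first isomorphism theorem. Your explicit remark that \(\ker\varphi_{\beta} \subseteq \ker\rho_{\beta}\) is a small but welcome clarification that the paper leaves implicit.
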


\begin{proof}
The situation in question is illustrated by
\begin{center}
\begin{tikzcd}
    \pi_{1}^{Z}\arrow{r}{\varphi_{\beta}}\arrow[swap]{rd}{\rho_{\beta}} & \pi_{1}^{Y} \wr_{E_{\beta}} \Mon \beta \arrow{d}{\proj_{\beta}}\\
    & \Mon \beta
\end{tikzcd}.
\end{center}
To begin, because \(\rho_{\beta}\) and \(\cdot^{\circlearrowleft}\) are well-defined, \(\varphi_{\beta}\) is well-defined.  Now, let \(\lambda_{1}, \lambda_{2} \in \pi_{1}^{Z}\).  To see that \(\varphi_{\beta}\) is a homomorphism, first note that because \(\rho_{\beta}\) is a homomorphism, \(\tau_{\lambda_{1}}\tau_{\lambda_{2}} = \tau_{\lambda_{1} \ast \lambda_{2}}\) and
\[\varphi_{\beta}(\lambda_{1}) \varphi_{\beta}(\lambda_{2}) = (f_{\lambda_{1}}, \tau_{\lambda_{1}})(f_{\lambda_{2}}, \tau_{\lambda_{2}}) = (f_{\lambda_{1}} \cdot f_{\lambda_{2}}^{\tau_{\lambda_{1}}^{-1}}, \tau_{\lambda_{1} \ast \lambda_{2}}).\]
As \(\varphi_{\beta}(\lambda_{1} \ast \lambda_{2}) = (f_{\lambda_{1} \ast \lambda_{2}}, \tau_{\lambda_{1} \ast \lambda_{2}})\), it remains to show that \(f_{\lambda_{1} \ast \lambda_{2}} = f_{\lambda_{1}} \cdot f_{\lambda_{2}}^{\tau_{\lambda_{1}}^{-1}}\).

Let \(e_{\beta} \in E_{\beta}\) and lift \(\lambda_{1}\) by \(\beta\) to \(\tilde{\lambda}_{1}\) so that \(\tilde{\lambda}_{1}(0) \in e_{\beta}\).  Further, lift \(\lambda_{2}\) by \(\beta\) to \(\tilde{\lambda}_{2}\) so that \(\tilde{\lambda}_{2}(0) = \tilde{\lambda}_{1}(1)\).  The uniqueness of liftings guarantees that the \(\tilde{\lambda}_{1} \ast \tilde{\lambda}_{2}\) is identical to the lifting of \(\lambda_{1} \ast \lambda_{2}\) beginning at \(e_{\beta}(1/2)\).  By definition, \(e_{\beta}^{\tau_{\lambda_{1}}}\) is the edge containing \(\tilde{\lambda}_{1}(1) = \tilde{\lambda}_{2}(0)\), and
\begin{align*}
(f_{\lambda_{1}} \cdot f_{\lambda_{2}}^{\tau_{\lambda_{1}}^{-1}})(e_{\beta}) &= f_{\lambda_{1}}(e_{\beta}) \ast f_{\lambda_{2}}(e_{\beta}^{\tau_{\lambda_{1}}})\\
&= \tilde{\lambda}_{1}^{\circlearrowleft} \ast \tilde{\lambda}_{2}^{\circlearrowleft}\\
&\simeq_{p} (\tilde{\lambda}_{1} \ast \tilde{\lambda}_{2})^{\circlearrowleft} \qquad \textrm{(by Lemma \ref{groupoidhom})}\\
&= f_{\lambda_{1} \ast \lambda_{2}}(e_{\beta}).
\end{align*}
As \(e_{\beta}\) was arbitrary, this shows that \(\varphi_{\beta}\) is a homomorphism.

As
\[\lambda \in \ker \rho_{\beta} \Longleftrightarrow \lambda \in \ker(\proj_{\beta} \circ \varphi_{\beta}) \Longleftrightarrow \varphi_{\beta}(\lambda) \in \ker \proj_{\beta}\]
and \(\varphi_{\beta}(\ker \rho_{\beta}) \approx \ker \rho_{\beta}/\ker \varphi_{\beta}\) by the first isomorphism theorem, the result follows.
\end{proof}

The homomorphism \(\varphi_{\beta}\), and its image, serve to capture the important information regarding the effect that \(\beta\) has on the monodromy of a Bely\u{\i} map \(\gamma\) when composing the functions.

\begin{defnnum}
The extended monodromy representation of the dynamical Bely\u{\i} map \(\beta: Y \rightarrow Z\) is defined by
\begin{align*}
    \varphi_{\beta}: \pi_{1}^{Z} &\longrightarrow \pi_{1}^{Y} \wr_{E_{\beta}} \Mon \beta,\\
    \lambda &\longmapsto f_{\lambda} \rtimes \rho_{\beta}(\lambda),
\end{align*}
where \(f_{\lambda}\) is the extending pattern of \(\beta\) from \(\lambda\) and \(\rho_{\beta}\) is the monodromy representation of \(\beta\).  Further, the extended monodromy group of \(\beta\), \(\EMon \beta\), is the image of the extended monodromy representation, \(\varphi_{\beta}(\pi_{1}^{Z})\).
\end{defnnum}

\begin{cor}
The extended monodromy group of \(\beta\), \(\EMon \beta\), is given as the extension
\[1 \rightarrow \ker \rho_{\beta}/\ker \varphi_{\beta} \rightarrow \EMon \beta \rightarrow \Mon \beta \rightarrow 1,\]
where \(\rho_{\beta}\) and \(\varphi_{\beta}\) are the monodromy representation and extended monodromy representation of \(\beta\), respectively.
\label{projbeta}
\end{cor}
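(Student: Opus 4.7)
The plan is to observe that this corollary is essentially a direct repackaging of Theorem \ref{extendedmon} as a short exact sequence, so the proof should consist of identifying each term in the sequence with an object produced by that theorem and verifying exactness at each spot.

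First, I would unwind the definitions: by definition $\EMon \beta = \varphi_\beta(\pi_1^Z)$, and the projection map $\proj_\beta\colon \pi_1^Y \wr_{E_\beta} \Mon \beta \twoheadrightarrow \Mon \beta$ onto the second factor restricts to a homomorphism $\EMon \beta \to \Mon \beta$. The surjectivity of this restriction follows because $\rho_\beta = \proj_\beta \circ \varphi_\beta$ and $\rho_\beta(\pi_1^Z) = \Mon \beta$ by definition of the monodromy group; this gives the surjection $\EMon \beta \twoheadrightarrow \Mon \beta$ at the right end of the sequence.

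Next I would identify the kernel of $\proj_\beta\big|_{\EMon \beta}$. Theorem \ref{extendedmon} has already established the equality
\[\ker\bigl(\proj_\beta\big|_{\varphi_\beta(\pi_1^Z)}\bigr) = \varphi_\beta(\ker \rho_\beta),\]
which is precisely the chain of equivalences
\[\lambda \in \ker \rho_\beta \iff \lambda \in \ker(\proj_\beta \circ \varphi_\beta) \iff \varphi_\beta(\lambda) \in \ker \proj_\beta\]
given near the end of that theorem's proof. Applying the first isomorphism theorem to the restriction $\varphi_\beta\big|_{\ker \rho_\beta}\colon \ker \rho_\beta \to \varphi_\beta(\ker \rho_\beta)$, whose kernel is $\ker \rho_\beta \cap \ker \varphi_\beta = \ker \varphi_\beta$ (since $\ker \varphi_\beta \subseteq \ker \rho_\beta$ by the commutative triangle in the proof of Theorem \ref{extendedmon}), yields
\[\ker\bigl(\proj_\beta\big|_{\EMon \beta}\bigr) \;\cong\; \ker \rho_\beta / \ker \varphi_\beta.\]

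Finally, I would assemble these pieces into the claimed short exact sequence, with the left arrow given by the inclusion of this kernel into $\EMon \beta$ and the right arrow given by $\proj_\beta\big|_{\EMon \beta}$; exactness in the middle is immediate from the kernel computation, and exactness on the right from surjectivity already noted. There is no real obstacle here: the content has all been done inside Theorem \ref{extendedmon}, and the corollary is purely a reformulation, so the only care needed is to make the identifications above explicit rather than to produce new mathematics.
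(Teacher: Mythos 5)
Your proposal is correct and follows essentially the same route as the paper: the corollary is a direct repackaging of the final paragraph of the proof of Theorem \ref{extendedmon}, where the paper already establishes \(\ker\bigl(\proj_{\beta}\big|_{\varphi_{\beta}(\pi_{1}^{Z})}\bigr) = \varphi_{\beta}(\ker\rho_{\beta}) \approx \ker\rho_{\beta}/\ker\varphi_{\beta}\) and the surjectivity onto \(\Mon\beta\) via \(\rho_{\beta} = \proj_{\beta}\circ\varphi_{\beta}\). Your explicit check that \(\ker\varphi_{\beta} \subseteq \ker\rho_{\beta}\) (needed for the first isomorphism theorem step) is a detail the paper leaves implicit, and it is right.
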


\begin{defnnum}
The monodromy extending group of \(\beta\), \(\MonExt \beta\), is the complement of \(\Mon \beta\) in \(\EMon \beta\) and is given by
\[\MonExt \beta = \ker \rho_{\beta}/\ker \varphi_{\beta}.\]
\end{defnnum}

\begin{example}
In computing the monodromy extending group of \(\beta(x)\) from \eqref{betadef}, writing \texttt{a},\texttt{b} for \(\lambda_{0}, \lambda_{1}\), \(\ker \rho_{\beta}\) is computed first as\newline
\noindent\verb|gap> MonBeta := Group((1,2,3), ());;|\newline
\noindent\verb|gap> rho := GroupHomomorphismByImages(FreeGroup("a", "b"), MonBeta);;|\newline
\noindent\verb|gap> GeneratorsOfGroup(Kernel(rho));|\newline
\noindent\verb|[ b, a^3, a*b*a^-1, a^-1*b*a ]|\newline
From Example \ref{extendingexample2}, \(f_{\lambda_{0}} = (1, 1, b)\) and \(f_{\lambda_{1}} = (a, 1, 1)\), so that
\[\varphi_{\beta}(\lambda_{0}) = (1, 1, b) \rtimes (1\;2\;3), \qquad \varphi_{\beta}(\lambda_{1}) = (a, 1, 1) \rtimes 1.\]
Then from \(\varphi_{\beta}(\lambda\ker \rho_{\beta}) \approx \ker \rho_{\beta}/\ker \varphi_{\beta}\), applying \(\varphi_{\beta}\) to the generators of \(\ker \rho_{\beta}\) above results in
\[\MonExt \beta \approx \big\langle (a, 1, 1) \rtimes 1,\: (b, b, b) \rtimes 1,\: (1, 1, a) \rtimes 1,\: (1, a, 1) \rtimes 1 \big\rangle.\]
\end{example}

\subsection{The Monodromy Group of a Composition}\hspace*{\fill}

Having established the extended monodromy group of \(\beta\) allows for determining \(\Mon \beta \circ \gamma\), for a Bely\u{\i} map \(\gamma\), simply through postcomposition of the extending patterns of \(\beta\) by the monodromy representation of \(\gamma\).

\begin{thm}
    Let \(\beta\) be a dynamical Bely\u{\i} map.  For any Bely\u{\i} map \(\gamma\), let \(\rho_{\gamma}\) be its monodromy representation and define
    \[\rho_{\gamma*}: \Fun(E_{\beta}, \pi_{1}^{Y}) \rightarrow \Fun(E_{\beta}, \Mon \gamma)\]
    by \(\rho_{\gamma*}(f) = \rho_{\gamma} \circ f\).  Then
    \[\varphi_{\gamma} \coloneqq (\rho_{\gamma*} \rtimes \id) \circ \varphi_{\beta}: \pi_{1}^{Z} \rightarrow \Mon \gamma \wr_{E_{\beta}} \Mon \beta\]
    coincides with \(\rho_{\beta\gamma}\).
\end{thm}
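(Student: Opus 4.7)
The plan is to verify that $\varphi_{\gamma}$ and $\rho_{\beta\gamma}$ agree as homomorphisms $\pi_{1}^{Z} \to \Mon \gamma \wr_{E_{\beta}} \Mon \beta$, where the target is regarded as a subgroup of $\operatorname{Sym}(E_{\gamma} \times E_{\beta})$ via its standard action. Since $\rho_{\beta\gamma}$ is characterized by its action on $E_{\beta\gamma}$, and since Proposition \ref{edgebijection} supplies a canonical identification $E_{\beta\gamma} \leftrightarrow E_{\gamma} \times E_{\beta}$, it suffices to check that for each $\lambda \in \pi_{1}^{Z}$ the permutation $\varphi_{\gamma}(\lambda)$ and the permutation $\rho_{\beta\gamma}(\lambda)$ send every pair $(e_{\gamma}, e_{\beta})$ to the same pair.

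First I would unfold $\varphi_{\gamma}(\lambda)$. By construction, $\varphi_{\beta}(\lambda) = f_{\lambda} \rtimes \tau_{\lambda}$, so after applying $\rho_{\gamma*} \rtimes \id$ we obtain
\[\varphi_{\gamma}(\lambda) = (\rho_{\gamma} \circ f_{\lambda}) \rtimes \tau_{\lambda}.\]
Under the wreath-product action on $E_{\gamma} \times E_{\beta}$, this element sends $(e_{\gamma}, e_{\beta})$ to $\bigl(e_{\gamma}^{\rho_{\gamma}(f_{\lambda}(e_{\beta}))},\; e_{\beta}^{\tau_{\lambda}}\bigr)$. Next I would invoke the fact that the monodromy action of $\pi_{1}^{Y}$ on $E_{\gamma}$ factors through $\rho_{\gamma}$ by definition, so that $e_{\gamma}^{\rho_{\gamma}(f_{\lambda}(e_{\beta}))}$ is literally equal to $e_{\gamma}^{f_{\lambda}(e_{\beta})}$ in the notation used throughout the paper.

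On the other side, Theorem \ref{actiononbetagamma} provides the action of $\rho_{\beta\gamma}(\lambda)$ directly: $(e_{\gamma}, e_{\beta})^{\lambda} = \bigl(e_{\gamma}^{f_{\lambda}(e_{\beta})},\; e_{\beta}^{\lambda}\bigr)$, with $e_{\beta}^{\lambda} = e_{\beta}^{\tau_{\lambda}}$ by the definition $\tau_{\lambda} = \rho_{\beta}(\lambda)$. Comparing the two expressions coordinatewise yields the desired equality, and since this holds for every generator and every edge, $\varphi_{\gamma} = \rho_{\beta\gamma}$ as homomorphisms. The fact that $\varphi_{\gamma}$ is a homomorphism at all follows from Theorem \ref{extendedmon} together with the obvious functoriality of $\rho_{\gamma*} \rtimes \id$, so no independent multiplicativity check is needed.

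The main obstacle is bookkeeping rather than mathematics: one must be careful that the twisted multiplication $f^{\tau}(e_{\beta}) = f(e_{\beta}^{\tau^{-1}})$ baked into the wreath product $\pi_{1}^{Y} \wr_{E_{\beta}} \Mon \beta$ is compatible with the right action convention used for $\rho_{\beta\gamma}$, and that the coordinate $e_{\gamma}^{f_{\lambda}(e_{\beta})}$ evaluates $f_{\lambda}$ at $e_{\beta}$ rather than at $e_{\beta}^{\tau_{\lambda}}$ or its inverse image. Once the wreath product action on $E_{\gamma} \times E_{\beta}$ is pinned down to match the statement of Theorem \ref{actiononbetagamma}, the proof collapses to a one-line pointwise comparison; all substantive content has already been deposited in Theorem \ref{actiononbetagamma} and Theorem \ref{extendedmon}.
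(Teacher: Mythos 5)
Your proof is correct, but it takes a different route from the paper. The paper's proof does not compare the two representations pointwise: it shows \(\ker \rho_{\beta \circ \gamma} = \ker \varphi_{\gamma}\) (both inclusions read off from \eqref{extendedaction}, since an element of either kernel must act trivially on both components of every \((e_{\gamma}, e_{\beta})\)) and then invokes the first isomorphism theorem to conclude \(\Mon \beta \circ \gamma \approx \varphi_{\gamma}(\pi_{1}^{Z})\). You instead unfold \(\varphi_{\gamma}(\lambda) = (\rho_{\gamma} \circ f_{\lambda}) \rtimes \tau_{\lambda}\), identify its action on \(E_{\gamma} \times E_{\beta}\), and match it coordinatewise against the action of \(\rho_{\beta\gamma}(\lambda)\) given by Theorem \ref{actiononbetagamma}. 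Both arguments rest entirely on \eqref{extendedaction}, so neither is doing more substantive work than the other; but your version directly establishes the stronger, literal claim that the two homomorphisms \emph{coincide} as maps into the symmetric group on \(E_{\gamma} \times E_{\beta}\) (which is what the theorem actually asserts), whereas the paper's kernel argument by itself only delivers an abstract isomorphism of the images and leaves the identification of the two maps implicit. Your closing caveat about the twisted multiplication \(f^{\tau}(e_{\beta}) = f(e_{\beta}^{\tau^{-1}})\) is well placed: with the action \((e_{\gamma}, e_{\beta})^{(f, \tau)} = (e_{\gamma}^{f(e_{\beta})}, e_{\beta}^{\tau})\), evaluating \(f_{\lambda}\) at the source edge \(e_{\beta}\) is exactly what makes the semidirect-product multiplication of Theorem \ref{extendedmon} compatible with composition of permutations, so your bookkeeping checks out.
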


\begin{proof}
Showing that \(\ker \rho_{\beta \circ \gamma} = \ker \varphi_{\gamma}\) will imply that \(\Mon \beta \circ \gamma \approx \varphi_{\gamma}(\pi_{1}^{Z})\) by the first isomorphism theorem.

If \(\lambda \in \ker \rho_{\beta \circ \gamma}\), then by \eqref{extendedaction}, \(\rho_{\beta}(\lambda) = 1\) and \(\rho_{\gamma} \circ f_{\lambda} = 1\), so that \(\lambda \in \ker \varphi_{\gamma}\).  On the other hand, if \(\lambda \in \ker \varphi_{\gamma}\), then \(\rho_{\beta} = 1\) and \(\rho_{\gamma} \circ f_{\lambda} = 1\), and by \eqref{extendedaction}, \(\lambda \in \ker \rho_{\beta \circ \gamma}\).
\end{proof}

\begin{example}
For \(\gamma\) as in \eqref{betadef}, applying \(\rho_{\gamma*}\) to \(\MonExt \beta\) as computed in Example 4.5 yields
\[\Big\langle \big((1\;2), 1, 1\big) \rtimes 1, \big((1\;3), (1\;3), (1\;3)\big) \rtimes 1, \big(1, 1, (1\;2)\big) \rtimes 1, \big(1, (1\;2), 1\big) \rtimes 1 \Big\rangle \approx S_{3} \times S_{3} \times S_{3}.\]
As such, \(\Mon \beta \circ \gamma\) satisfies the short exact sequence
\[1 \rightarrow S_{3} \times S_{3} \times S_{3} \rightarrow \Mon \beta \circ \gamma \rightarrow C_{3} \rightarrow 1.\]
\end{example}

\begin{cor}
\cites[Theorem 3.3]{zvonkinadrianov}[Section 4.3]{bond}
    The monodromy group \(\Mon \beta \circ \gamma\) of the composition of a dynamical Bely\u{\i} map \(\beta\) and a Bely\u{\i} map \(\gamma\) is isomorphic to a subgroup of the wreath product \(\Mon \gamma \wr_{E_{\beta}} \Mon \beta\).  Moreover, this isomorphism is given by
    \begin{align*}
        \Mon \beta \circ \gamma &\longrightarrow \Mon \gamma \wr_{E_{\beta}} \Mon \beta\\
        \rho_{\beta \circ \gamma}(\lambda) &\longmapsto \big(\rho_{\gamma} \circ f_{\lambda}, \rho_{\beta}(\lambda)\big),
    \end{align*}
    where \(f_{\lambda}\) is the extending pattern of \(\beta\) from \(\lambda\).
\end{cor}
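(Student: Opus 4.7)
The plan is to derive the corollary directly from the preceding theorem, which has already done the substantive work of showing that $\varphi_{\gamma}\colon \pi_{1}^{Z} \to \Mon \gamma \wr_{E_{\beta}} \Mon \beta$ is a homomorphism with $\ker \varphi_{\gamma} = \ker \rho_{\beta \circ \gamma}$. The corollary amounts to repackaging this as an explicit injection out of $\Mon \beta \circ \gamma$, so the bulk of the argument is a routine application of the first isomorphism theorem.

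First I would unravel the definition of $\varphi_{\gamma}$. By construction,
\[\varphi_{\gamma}(\lambda) = (\rho_{\gamma*} \rtimes \id)\big(\varphi_{\beta}(\lambda)\big) = \big(\rho_{\gamma*}(f_{\lambda}),\, \rho_{\beta}(\lambda)\big) = \big(\rho_{\gamma} \circ f_{\lambda},\, \rho_{\beta}(\lambda)\big),\]
and by definition of the wreath product this element lies in $\Fun(E_{\beta}, \Mon \gamma) \rtimes \Mon \beta = \Mon \gamma \wr_{E_{\beta}} \Mon \beta$. Thus the proposed codomain is automatic, and the formula displayed in the statement is literally the value of $\varphi_{\gamma}$ at $\lambda$.

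Next, I would invoke the first isomorphism theorem on $\varphi_{\gamma}$ together with the kernel equality $\ker \varphi_{\gamma} = \ker \rho_{\beta \circ \gamma}$ from the preceding theorem. This factors $\varphi_{\gamma}$ as
\[\pi_{1}^{Z} \twoheadrightarrow \pi_{1}^{Z}/\ker \rho_{\beta \circ \gamma} \xrightarrow{\ \sim\ } \varphi_{\gamma}(\pi_{1}^{Z}) \hookrightarrow \Mon \gamma \wr_{E_{\beta}} \Mon \beta.\]
The quotient in the middle is precisely $\Mon \beta \circ \gamma$, and the induced isomorphism sends $\rho_{\beta \circ \gamma}(\lambda)$ to $\varphi_{\gamma}(\lambda) = \big(\rho_{\gamma} \circ f_{\lambda},\, \rho_{\beta}(\lambda)\big)$, which is exactly the assignment claimed.

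There is no real obstacle here; the only delicate point is well-definedness of the map on $\Mon \beta \circ \gamma$, which follows from the kernel equality: if $\rho_{\beta \circ \gamma}(\lambda_{1}) = \rho_{\beta \circ \gamma}(\lambda_{2})$, then $\lambda_{1}\lambda_{2}^{-1} \in \ker \rho_{\beta \circ \gamma} = \ker \varphi_{\gamma}$, so $\varphi_{\gamma}(\lambda_{1}) = \varphi_{\gamma}(\lambda_{2})$. Injectivity of the induced map comes from the reverse implication of the same equality, and containment of the image in $\Mon \gamma \wr_{E_{\beta}} \Mon \beta$ was checked in the first step. With those three boxes ticked, the corollary is complete.
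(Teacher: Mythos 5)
Your proposal is correct and follows essentially the same route as the paper: the corollary is an immediate consequence of the preceding theorem, whose proof already establishes \(\ker \varphi_{\gamma} = \ker \rho_{\beta \circ \gamma}\) and invokes the first isomorphism theorem to obtain \(\Mon \beta \circ \gamma \approx \varphi_{\gamma}(\pi_{1}^{Z}) \leq \Mon \gamma \wr_{E_{\beta}} \Mon \beta\). Your explicit checks of well-definedness, injectivity, and the codomain simply spell out what the paper leaves implicit.
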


\begin{cor}
    \cite[Theorem 4.20]{bond}
    Let \(\beta\) be a dynamical Bely\u{\i} map with monodromy \((\tau_{0}, \tau_{1})\) and extending pattern \((f_{0}, f_{1})\).  Then for any Bely\u{\i} map \(\gamma\),
    \[\Mon \beta \circ \gamma = \big\langle\big(\rho_{\gamma} \circ f_{0}, \tau_{0}\big), \big(\rho_{\gamma} \circ f_{1}, \tau_{1}\big)\big\rangle.\]
    \label{compositiongenerators}
\end{cor}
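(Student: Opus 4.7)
The plan is to read off the generators of $\Mon \beta \circ \gamma$ from the generators of $\pi_1^Z$ via the homomorphism $\varphi_\gamma$ constructed in the preceding theorem. First I would invoke that theorem to identify $\Mon \beta \circ \gamma$ with $\varphi_\gamma(\pi_1^Z)$, which follows from $\ker \rho_{\beta \circ \gamma} = \ker \varphi_\gamma$ together with the first isomorphism theorem.

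Next I would appeal to the standard fact that $\pi_1(\mathbb{P}^{1}(\mathbb{C})_{*}, 1/2)$ is a free group of rank two, generated by the loops $\lambda_0$ and $\lambda_1$ with winding numbers one around $0$ and $1$, respectively (a loop around $\infty$ being expressible in terms of these two). Since $\varphi_\gamma$ is a group homomorphism, its image $\varphi_\gamma(\pi_1^Z)$ is generated by the two elements $\varphi_\gamma(\lambda_0)$ and $\varphi_\gamma(\lambda_1)$.

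The remaining step is the explicit computation of these two generators. Unpacking the definition $\varphi_\gamma = (\rho_{\gamma*} \rtimes \id) \circ \varphi_\beta$, one has $\varphi_\beta(\lambda_j) = f_{\lambda_j} \rtimes \rho_\beta(\lambda_j) = f_j \rtimes \tau_j$. Applying $\rho_{\gamma*} \rtimes \id$ postcomposes the first component $f_j: E_\beta \to \pi_1^Y$ with $\rho_\gamma$, while the permutation $\tau_j \in \Mon \beta$ in the second component is fixed. This yields $\varphi_\gamma(\lambda_j) = (\rho_\gamma \circ f_j, \tau_j)$ for $j = 0, 1$, which are the claimed generators.

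I do not expect any real obstacle: once the preceding theorem is in hand, the corollary reduces to the elementary observation that a homomorphic image of a finitely generated group is generated by the images of the generators, combined with the presentation of $\pi_1^Z$ as the free group on $\lambda_0$ and $\lambda_1$. The only substantive content was already established upstream, namely that $\varphi_\gamma$ is a well-defined homomorphism coinciding with $\rho_{\beta \circ \gamma}$.
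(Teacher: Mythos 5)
Your proposal is correct and follows the same route the paper intends: the corollary is an immediate consequence of the preceding theorem identifying \(\rho_{\beta\circ\gamma}\) with \(\varphi_{\gamma}=(\rho_{\gamma*}\rtimes\id)\circ\varphi_{\beta}\), combined with the freeness of \(\pi_{1}(\mathbb{P}^{1}(\mathbb{C})_{*},1/2)\) on the loops \(\lambda_{0},\lambda_{1}\) and the evaluation \(\varphi_{\gamma}(\lambda_{j})=(\rho_{\gamma}\circ f_{j},\tau_{j})\). No gaps.
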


\begin{example}
Applying Corollary \ref{compositiongenerators} to the extending pattern from Example \ref{extendingexample2}, for \(\beta, \gamma\) defined in \eqref{betadef},
\begin{align*}
\Mon \beta \circ \gamma &= \big\langle (1, 1, \sigma_{1}) \rtimes \tau_{0}, (\sigma_{0}, 1, 1) \rtimes \tau_{1}\big\rangle\\
&= \Big\langle \big(1, 1, (1\;3)\big) \rtimes (1\;2\;3), \big((1\;2), 1, 1\big) \rtimes 1 \Big\rangle\\
&= (S_{3} \times S_{3} \times S_{3}) \rtimes C_{3},
\end{align*}
in agreement with Example 4.7.
\end{example}

\section{Working with the Extended Monodromy Group}

In this section, let \(a, b\) be the generators of \(\pi_{1}(\mathbb{P}^{1}(\mathbb{C})_{*})\), where \(a \simeq_{p} e^{2\pi it}/2\) and \(b \simeq_{p} 1 - e^{2\pi it}/2\).

\subsection{Computing the Extended Monodromy Group}\hspace*{\fill}
\label{computingperms}

Based on Convention \ref{sheetconv} and Lemma \ref{groupoidhom}, determining the extending pattern of a dynamical Bely\u{\i} map \(\beta\) amounts to determining where the liftings \(\beta^{-1}(e^{2\pi it}/2)\) and \(\beta^{-1}(1 - e^{2\pi it}/2)\) cross \((-\infty, 0)\) or \((1, \infty)\) and in which direction.  One approach is to use differential equations to lift the loops \(e^{2\pi it}/2\) and \(1 - e^{2\pi it}/2\) \cite[Section 9.2.5]{goins}, then sample points along the lifted paths and identify consecutive points whose imaginary parts have distinct signs.  The extending pattern is then determined by Description \ref{casebreakdown} according to the identified sign change.

Let \(\lvert E_{\beta} \rvert = n\) and let \(\gamma\) be a Bely\u{\i} map with \(\lvert E_{\gamma} \rvert = m\).  Labeling the edges of \(\beta\), \(\gamma\), and \(\beta \circ \gamma\) beginning with \(0\), the identification \((s, t) \mapsto s \cdot n + t\) gives a mapping \(E_{\gamma} \times E_{\beta} \rightarrow E_{\beta \circ \gamma}\).  In this way, \eqref{extendedaction} specifies the action of a loop \(\lambda\) as
\[(s \cdot n + t)^{\lambda} \longleftrightarrow (s, t)^{\lambda} = \Big(s^{f_{\lambda}(t)}, t^{\lambda}\Big) \longleftrightarrow s^{f_{\lambda}(t)} \cdot n + t^{\lambda}.\]
As such, Algorithm \ref{compositealg} constructs the monodromy of \(\beta \circ \gamma\) as arrays \((\eta_{0}, \eta_{1})\) with \(\eta_{i}[j] = \eta_{i}(j)\) from the extending pattern \((f_{0}, f_{1})\) of \(\beta\) and the monodromies \((\tau_{0}, \tau_{1})\) and \((\sigma_{0}, \sigma_{1})\) of \(\beta\) and \(\gamma\), respectively.

\stepcounter{thm}
\begin{algorithm}[ht]
    \caption{Obtaining the monodromy of \(\beta \circ \gamma\) from \(\beta\) and \(\gamma\)}
    \label{compositealg}
\begin{algorithmic}[1]
    \Function{CompositeMonodromy}{\(\{\tau_{0}, \tau_{1}, f_{0}, f_{1}\}\), \(\{\sigma_{0}, \sigma_{1}\}\)}
    \State \(\rho_{\gamma*} \gets (a \mapsto \sigma_{0}, b \mapsto \sigma_{1})\)
    \State \(f_{0}, f_{1} \gets \rho_{\gamma*}(f_{0}), \rho_{\gamma*}(f_{1})\)
    \State \(n \gets \lvert E_{\beta} \rvert\), \(m \gets \lvert E_{\gamma} \rvert\)
    \State \(\eta_{0} \gets [\,],\, \eta_{1} \gets [\,]\)
    \For{\(0 \leq s < m\)}
        \For{\(0 \leq t < n\)}
            \State append \(f_{0}(t)(s) \cdot n + \tau_{0}(t)\) to \(\eta_{0}\)
            \State append \(f_{1}(t)(s) \cdot n + \tau_{1}(t)\) to \(\eta_{1}\)
        \EndFor
    \EndFor
    \State\Return{\(\eta_{0}, \eta_{1}\)}
\end{algorithmic}
\end{algorithm}

\subsection{Extended Monodromy Group Examples}\hspace*{\fill}

\newsavebox{\boxzero}
\savebox{\boxzero}{\scriptsize\(0\)}
\newsavebox{\boxone}
\savebox{\boxone}{\scriptsize\(1\)}

\stepcounter{thm}
\begin{figure}[ht]
    \centering
    \scalebox{0.55}{
    \begin{tikzpicture}
		\GraphInit[vstyle=Classic]
        \SetVertexMath
        \tikzset{
            VertexStyle/.style = {
                inner sep = 0pt,
                outer sep = 0pt,
                shape = circle,
                fill = black,
                minimum size = 1.5pt,
                draw}
                }
        \Vertex[empty]{A}
        \SetGraphUnit{5.5}
        \WE[empty](A){neg}
        \SetGraphUnit{1}
        \EA[empty](A){pos}
        \SetUpEdge[color = gray!50]
        \Edge(neg)(pos)
        \Vertex[L = \usebox{\boxone}, LabelOut = true, Lpos = 135, Ldist = -0.65cm]{01}
        \tikzset{
            VertexStyle/.style = {
                inner sep = 0pt,
                outer sep = 0pt,
                shape = circle,
                fill = black,
                minimum size = 6pt,
                draw}
                }
        \SetGraphUnit{2.5}
        \tikzset{VertexStyle/.append style={fill = black}}
        \WE[L = \usebox{\boxzero}, LabelOut = true, Lpos = 135, Ldist=-0.65cm](A){00}
        \tikzset{VertexStyle/.append style={fill = white}}
        \SetGraphUnit{2.5}
        \EA[NoLabel = true](00){C}
        \SetGraphUnit{1.7678}
        \NOWE[NoLabel = true](00){B}
        \SOWE[NoLabel = true](00){D}
    \SetUpEdge[labelstyle = {sloped,above = 2}]
    	\Edge[label = 2](00)(B)
        \Edge[label = 3](00)(D)
    \SetUpEdge[labelstyle = {sloped,below = 2}]        
        \Edge[label = 1](00)(C)
    \end{tikzpicture}
    }\qquad\qquad
    \scalebox{0.55}{
    \begin{tikzpicture}
		\GraphInit[vstyle=Classic]
        \SetVertexMath
        \tikzset{
            VertexStyle/.style = {
                inner sep = 0pt,
                outer sep = 0pt,
                shape = circle,
                fill = black,
                minimum size = 1.5pt,
                draw}
                }
        \Vertex[empty]{A}
        \SetGraphUnit{5.5}
        \EA[empty](A){pos}
        \SetGraphUnit{1}
        \WE[empty](A){neg}
        \SetUpEdge[color = gray!50]
        \Edge(neg)(pos)
        \Vertex[L = \usebox{\boxzero}, LabelOut = true, Lpos = 135, Ldist = -0.65cm]{00}
        \tikzset{
            VertexStyle/.style = {
                inner sep = 0pt,
                outer sep = 0pt,
                shape = circle,
                fill = black,
                minimum size = 6pt,
                draw}
                }
        \SetGraphUnit{2.5}
        \tikzset{VertexStyle/.append style={fill = black}}
        \EA[L = \usebox{\boxone}, LabelOut = true, Lpos = 135, Ldist=-0.65cm](A){01}
        \tikzset{VertexStyle/.append style={fill = white}}
        \SetGraphUnit{2.5}
        \WE[NoLabel = true](01){C}
        \SetGraphUnit{1.7678}
        \NOEA[NoLabel = true](01){B}
        \SOEA[NoLabel = true](01){D}
    \SetUpEdge[labelstyle = {sloped,below = 2}]
    	\Edge[label = 3](01)(B)
        \Edge[label = 2](01)(D)
    \SetUpEdge[labelstyle = {sloped,above = 2}]        
        \Edge[label = 1](01)(C)
    \end{tikzpicture}
    }
\caption{The dessins of \(\beta_{1}\) (left) and \(\beta_{2}\) (right).}
\label{beta1beta2}
\end{figure}
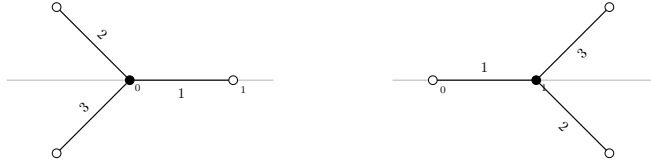

\smallskip
\begin{example}
Consider \(\beta_{1}\) and \(\beta_{2}\) as in the introduction, which have monodromy \((\tau_{0}, \tau_{1}) = \big((1\;2\;3), (1)\big)\).  The extending patterns are given by
\begin{align*}
    f_{0}^{(1)} = (1, a, 1),\quad f_{1}^{(1)} = (b, 1, 1),\qquad f_{0}^{(2)} = (1, b, 1),\quad f_{1}^{(2)} = (a, 1, 1),
\end{align*}
as can be seen from Figure \ref{beta1beta2}.  For \(\gamma\), \((\sigma_{0}, \sigma_{1}) = \big((1\;2), (1)\big)\), so that in the case of \(\beta_{1}\),
\begin{align*}
    \rho_{\gamma*} \circ \varphi_{\beta_{1}}(a) &= \rho_{\gamma*}\big((1, a, 1) \rtimes \tau_{0}\big) = (1, \sigma_{0}, 1) \rtimes \tau_{0},\\
    \rho_{\gamma*} \circ \varphi_{\beta_{1}}(b) &= \rho_{\gamma*}\big((b, 1, 1) \rtimes \tau_{1}\big) = (1, 1, 1) \rtimes 1.
\end{align*}
Computing powers of \((1, \sigma_{0}, 1) \rtimes \tau_{0}\) shows that
\[\Mon \beta_{1} \circ \gamma = \langle(1, \sigma_{0}, 1) \rtimes \tau_{0} \rangle \approx C_{6}.\]% \approx C_{2} \rtimes C_{3}.\]
On the other hand, for \(\beta_{2}\),
\begin{align*}
    \rho_{\gamma*} \circ \varphi_{\beta_{2}}(a) &= \rho_{\gamma*}\big((1, b, 1) \rtimes \tau_{0}\big) = (1, 1, 1) \rtimes \tau_{0},\\
    \rho_{\gamma*} \circ \varphi_{\beta_{2}}(b) &= \rho_{\gamma*}\big((a, 1, 1) \rtimes \tau_{1}\big) = (\sigma_{0}, 1, 1) \rtimes 1,
\end{align*}
and \(\Mon \beta_{2} \circ \gamma \approx C_{2} \times A_{4}\).% \approx (C_{2} \times C_{2} \times C_{2}) \rtimes C_{3}\).
\end{example}

\smallskip
\begin{example}
Let
\begin{gather*}
    \beta_{0}(z) \coloneqq \frac{(z^4+228z^3+494z^2-228z+1)^3}{1728z(z^2-11z-1)^5},\\
    \mu(z) \coloneqq \frac{55\sqrt{5}+123}{5\sqrt{5}+11}z = (11+\alpha^{-1})z,
\end{gather*}
where \(\alpha\) is the positive root of \(z^{2}-11z-1\).  Let \(\beta(z) = \beta_{0} \circ \mu(z)\), and consider the family of Bely\u{\i} maps \(\gamma_{m}(z) = z^{m}\).  Because the pole of \(\beta_{0}\) lying in the face of \(\beta\) containing \(z = 1\) does not lie over \(1\), \(\mu\) is needed to move the pole to \(z = 1\) to make \(\beta\) dynamical.  The monodromy and extending pattern of \(\beta\) under the ordering of edges given in Figure \ref{circledessin} is given by

\begin{equation*}
    \begin{gathered}
    \tau_{0} = (1\;2\;3)(4\;5\;6)(7\;9\;8)(10\;11\;12),\\
    \tau_{1} = (1\;2)(3\;4)(5\;8)(6\;7)(9\;10)(11\;12),
\end{gathered} \qquad
\begin{gathered}
    f_{0} = [a, a^{-1}, 1, a, a^{-1}, 1, 1, 1, 1, 1, 1, 1],\\
    f_{1} = [b^{-1}, b, 1, 1, 1, 1, 1, 1, 1, 1, a, a^{-1}].
\end{gathered}
\end{equation*}

\stepcounter{thm}
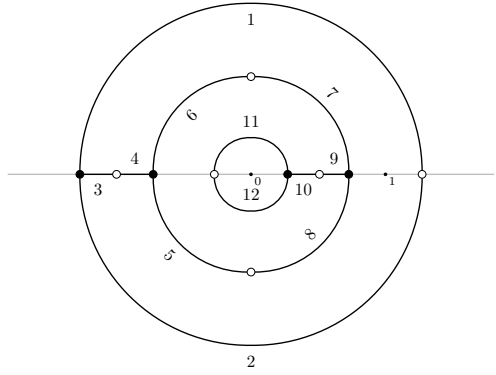
\begin{figure}[ht]
    \centering
    \scalebox{0.65}{
    \begin{tikzpicture}
		\GraphInit[vstyle=Classic]
        \SetVertexMath
        \tikzset{
            VertexStyle/.style = {
                inner sep = 0pt,
                outer sep = 0pt,
                shape = circle,
                fill = black,
                minimum size = 1.5pt,
                draw}
                }
        \Vertex[empty]{A}
        \SetGraphUnit{5}
        \EA[empty](A){pos}
        \WE[empty](A){neg}
        \SetUpEdge[color = gray!50]
        \Edge(neg)(pos)
        \Vertex[L = \usebox{\boxzero}, LabelOut = true, Lpos = 135, Ldist=-0.5cm]{00}
        \SetGraphUnit{2.75}
        \EA[L = \usebox{\boxone}, LabelOut = true, Lpos = 135, Ldist = -0.5cm](A){01}
        \tikzset{
            VertexStyle/.style = {
                inner sep = 0pt,
                outer sep = 0pt,
                shape = circle,
                fill = black,
                minimum size = 4.5pt,
                draw}
                }
        \SetGraphUnit{0.75}
        \tikzset{VertexStyle/.append style={fill = black}}
        \EA[NoLabel = true](A){B}
        \tikzset{VertexStyle/.append style={fill = white}}
        \WE[NoLabel = true](A){C}
        \SetGraphUnit{0.65}
        \EA[NoLabel = true](B){D}
        \SetGraphUnit{2}
        \tikzset{VertexStyle/.append style={fill = black}}
        \EA[NoLabel = true](A){E}
        \WE[NoLabel = true](A){F}
        \tikzset{VertexStyle/.append style={fill = white}}
        \NO[NoLabel = true](A){G}
        \SO[NoLabel = true](A){H}
        \SetGraphUnit{0.75}
        \WE[NoLabel = true](F){I}
        \SetGraphUnit{3.5}
        \EA[NoLabel = true](A){J}
        \tikzset{VertexStyle/.append style={fill = black}}
        \WE[NoLabel = true](A){K}
        \tikzset{VertexStyle/.style={shape=coordinate}}
        \NO[Lpos=270,Ldist=0.1cm](A){1}
        \SO[Lpos=270,Ldist=0.1cm](A){2}
        \SetGraphUnit{0.75}
        \NO[Lpos=90,Ldist=0.1cm](A){11}
        \SO[Lpos=90,Ldist=0.1cm](A){12}
    \SetUpEdge[labelstyle = {sloped,below = 2}]
    	\Edge[label = 10](B)(D)
        \Edge[label = 3](I)(K)
    \SetUpEdge[labelstyle = {sloped,above = 2}]
        \Edge[label = 9](D)(E)
        \Edge[label = 4](F)(I)
        \SetUpEdge[labelstyle = {sloped,above = 2}]
        \tikzset{EdgeStyle/.append style={bend left=45}}
        \Edge[label = 8](E)(H)
        \SetUpEdge[labelstyle = {sloped,below = 2}]
        \tikzset{EdgeStyle/.append style={bend left=45}}
        \Edge[label = 5](H)(F)
        \SetUpEdge[labelstyle = {sloped,above = 2}]
        \tikzset{EdgeStyle/.append style={bend right=45}}
        \Edge[label = 7](E)(G)
        \SetUpEdge[labelstyle = {sloped,below = 2}]
        \tikzset{EdgeStyle/.append style={bend right=45}}
        \Edge[label = 6](G)(F)
        \tikzset{EdgeStyle/.append style={bend left=45}}
        \Edge(J)(2)
        \Edge(2)(K)
        \Edge(B)(12)
        \Edge(12)(C)
        \tikzset{EdgeStyle/.append style={bend right=45}}
        \Edge(J)(1)
        \Edge(1)(K)
        \Edge(B)(11)
        \Edge(11)(C)
    \end{tikzpicture}
    }
\caption{A plot of \(\beta\) with edges labeled}
\label{circledessin}
\end{figure}

Computing \(\MonExt \beta\) using GAP, as in Appendix \ref{app}, finds that
\begin{equation*}
\begin{array}{r@{\extracolsep{\fill}}l@{\hskip 2pt}r@{\hskip 4pt}r@{\hskip 4pt}r@{\hskip 4pt}r@{\hskip 4pt}r@{\hskip 4pt}r@{\hskip 4pt}r@{\hskip 4pt}r@{\hskip 4pt}r@{\hskip 4pt}r@{\hskip 4pt}r@{\hskip 4pt}r@{\hskip 2pt}l}
\MonExt \beta = \langle&[&a^{-5}, &1, &1, &1, &1, &1, &1, &1, &1, &1, &a^{5}, &1&],\\ 
&[&1, &a^{-5}, &1, &1, &1, &1, &1, &1, &1, &1, &1, &a^{5}&],\\ 
&[&1, &1, &a^{-5}, &1, &1, &1, &1, &1, &1, &a^{5}, &1, &1&],\\ 
&[&1, &1, &1, &a^{-5}, &1, &1, &1, &1, &a^{5}, &1, &1, &1&],\\ 
&[&1, &1, &1, &1, &a^{-5}, &1, &1, &a^{5}, &1, &1, &1, &1&],\\ 
&[&1, &1, &1, &1, &1, &a^{-5}, &a^{5}, &1, &1, &1, &1, &1&]\rangle.
\end{array}
\end{equation*}

\noindent It follows that \(\Mon \beta \circ \gamma_{m}\) is an extension of
\[\MonExt \beta \approx \begin{cases}
C_{m}^{6} & \text{if \(5 \centernot\mid m\),}\\
C_{m/5}^{6} & \text{if \(5 \mid m\),}
\end{cases}\]
by \(\Mon \beta = A_{5}\).  Note that by the Schur-Zassenhaus theorem, if \(\gcd(m, 30) = 1\) or if \(m = 5\), then this extension is guaranteed to be split.
\end{example}

\smallskip
\begin{example}
Consider once again \(\beta\) from the previous example and let \(\gamma\) be a Bely\u{\i} map with monodromy given by \((\sigma_{0}, \sigma_{1}) = \big((2\;3\;4), (1\;2)(3\;4)\big)\).  Then
\[\MonExt \beta = C_{2}^{6} \rtimes C_{3},\qquad \Mon \beta = C_{3},\qquad \Mon \beta \circ \gamma = C_{2}^{6} \rtimes C_{9}.\]
But since \(C_{9}\) is not a split extension of \(C_{3}\) by \(C_{3}\), this shows that \(\Mon \beta \circ \gamma\) is not in general a split extension of \(\MonExt \beta\).
\end{example}

\section{Conclusion}

By using Theorem \ref{actiononbetagamma} to characterize the action of \(\pi_{1}^{Z}\) on  sheets lying over \(Y\) in terms of functions into \(\pi_{1}^{Y}\), it is possible to capture the information about \(\beta\) required to determine \(\Mon \beta \circ \gamma\) for any Bely\u{\i} map \(\gamma\).  The concept of the extending pattern of a dynamical Bely\u{\i} map, a pair of functions \(f_{0}, f_{1}: E_{\beta} \rightarrow \pi_{1}^{Y}\), is introduced to express the action of \(\pi_{1}^{Z}\), when lifted by \(\beta\), on sheets lying over \(Y\).  Finally, the extending pattern is used to determine the group which \(\Mon \beta\) extends to \(\Mon \beta \circ \gamma\).

The dynamical nature of \(\beta\) is primarily applied in the use of sheets of \(\gamma\) of a prescribed form to identify how paths between edges of \(\beta \circ \gamma\) affect the edge of \(\gamma\) corresponding to an edge of \(\beta \circ \gamma\).  Extending the approach of Section \ref{pathsonsheets} to arbitrary Bely\u{\i} maps would allow for applying Section \ref{monwreath} to the determination of \(\Mon \beta \circ \gamma\) for non-dynamical Bely\u{\i} maps, as well as additional classes of covering maps such as origamis.

Finally, although the extending pattern of a dynamical Bely\u{\i} map \(\beta\) permits determination of \(\MonExt \beta\), so that once a Bely\u{\i} map is specified, the groups which extend to \(\Mon \beta \circ \gamma\) can be determined, it would be useful to devise conditions on \(\beta\) and \(\gamma\) to determine when \(\Mon \beta \circ \gamma\) is a split extension of \(\MonExt \beta\) and \(\Mon \beta\), as well as when \(\Mon \beta \circ \gamma\) is a proper subgroup of \(\Mon \gamma \wr_{E_{\beta}} \Mon \beta\).

%%%%%%%%%%%%%%%%%%%%%%%%% bibliography %%%%%%%%%%%%%%%%%%%%%%%%%
%\bibliographystyle{abbrv}
%\bibliography{dessin_refs}
\printbibliography

@Article{wood,
  author       = {Melanie Matchett Wood},
  title        = {Belyi-extending maps and the {G}alois action on dessins d'enfants},
  journal      = {Publ. Res. Inst. Math. Sci.},
  journaltitle = {Kyoto University Research Institute for Mathematical Sciences Publications},
  year         = {2006},
  volume       = {42},
  number       = {3},
  pages        = {721-737},
  issn         = {0034-5318},
  url          = {http://www.ems-ph.org/journals/show_abstract.php?issn=0034-5318&vol=42&iss=3&rank=5},
  file         = {:/sd/Articles/Belyi-extending_maps_and_the_Galois_action_on_dessins_denfants.pdf:PDF},
}

@Article{voightetal,
  author       = {Michael Klug and Micahel Musty and Sam Schiavone and John Voight},
  title        = {Numerical calculuation of three-point branched covers of the projective line},
  journal      = {LMS J. Comput. Math.},
  journaltitle = {LMS Journal of Computation and Mathematics},
  year         = {2014},
  volume       = {17},
  number       = {1},
  pages        = {379-430},
  issn         = {1461-1570},
  doi          = {10.1112/S1461157014000084},
  file         = {:/sd/Articles/Numerical_calculation_of_three_point_branched_covers_of_the_projective_line.pdf:PDF},
}

@Book{landozvonkin,
  author    = {Sergei K. Lando and Alexander K. Zvonkin},
  title     = {Graphs on Surfaces and Their Applications},
  year      = {2004},
  series    = {Encyclopaedia of Mathematical Sciences},
  number    = {141},
  publisher = {Springer-Verlag},
  isbn      = {978-3-540-00203-1},
  pagetotal = {455},
}

@Misc{goins,
  author  = {Edray Herber Goins},
  title   = {Introduction to {D}essins d'{E}nfants},
  note    = {\url{https://www.math.purdue.edu/~egoins/notes/dessin_denfants.pdf}},
  urldate = {2018-01-01},
}

@Misc{gap,
  author       = {{The GAP {G}roup}},
  title        = {{GAP} -- {G}roups, {A}lgorithms, and {P}rogramming, {V}ersion 4.8.7},
  year         = {2017},
  howpublished = {\href{https://www.gap-system.org} {\texttt{https://www.gap-system.org}}},
}

@Misc{rcwa,
  author       = {Steven Kohl},
  title        = {{RCWA}, Residue-Class-Wise Affine Groups, {V}ersion 4.5.1},
  year         = {2017},
  howpublished = {\url{https://www.gap-system.org/Packages/rcwa.html}},
  note         = {Refereed GAP package},
}

@Misc{sage,
  author       = {{The Sage Developers}},
  title        = {{S}age{M}ath, the {S}age {M}athematics {S}oftware {S}ystem ({V}ersion 8.1)},
  year         = {2017},
  howpublished = {\url{https://www.sagemath.org}},
  key          = {SageMath},
}

@Book{bump,
  author    = {Daniel Bump},
  title     = {Lie Groups},
  year      = {2013},
  edition   = {Second},
  series    = {Graduate Texts in Mathematics},
  number    = {225},
  publisher = {Springer-Verlag},
  isbn      = {1-4614-8024-8},
}

@Article{zvonkinadrianov,
  author       = {Nikolai Adrianov and Alexander Zvonkin},
  title        = {Composition of Plane Trees},
  journal      = {Acta Applicandae Mathematica},
  journaltitle = {Acta Applicandae Mathematica},
  year         = {1998},
  volume       = {52},
  pages        = {239--245},
  issn         = {1572-9036},
}

@InProceedings{zvonkinshabat,
  author       = {George Shabat and Alexander Zvonkin},
  title        = {Plane Trees and Algebraic Numbers},
  booktitle    = {Jerusalem Combinatorics '93},
  year         = {1994},
  volume       = {178},
  series       = {Contemporary Mathematics},
  publisher    = {American Mathematical Society},
  pages        = {233--275},
}

@PhdThesis{bond,
  author       = {Jacob Bond},
  title        = {On the Computation and Composition of Belyi Maps and Dessins d'Enfants},
  type         = {{Ph.D.} Dissertation},
  school       = {Purdue University},
  year         = {2018},
}

@Misc{belyidb,
  author = {Michael Musty and Sam Schiavone and Jeroen Sijsling and John Voight},
  title = {BelyiDB},
  year = {2018},
  publisher = {GitHub},
  journal = {GitHub repository},
  howpublished = {\url{https://github.com/michaelmusty/BelyiDB}},
}
%%%%%%%%%%%%%%%%%%%%%%%%%%%%%%%%%%%%%%%%%%%%%%%%%%%%%%%%%%%%%%%%

\appendix
\section{Computing Composite Monodromies with GAP}
\label{app}

\subsection{Computing \texorpdfstring{\(\MonExt \beta\)}{MonExt B}}\hspace*{\fill}

To compute \(\EMon \beta\), along with \(\Mon \beta \circ \gamma_{m}\), in Example 5.3, SageMath's \cite{sage} interface to GAP \cite{gap} is used.  First, \(\EMon \beta\) is constructed using the monodromy and extending pattern of \(\beta\).

\smallskip
\begin{small}
\noindent\texttt{sage: F2 = libgap.FreeGroup(\textquotesingle a\textquotesingle, \textquotesingle b\textquotesingle); A,B = F2.GeneratorsOfGroup()\\
sage: tau0 = libgap.eval(\textquotesingle(1,2,3)(4,6,5)(7,9,8)(10,11,12)\textquotesingle)\\
sage: tau1 = libgap.eval(\textquotesingle(1,2)(3,4)(5,7)(6,8)(9,10)(11,12)\textquotesingle)\\
sage: MonBeta = libgap.Group(tau0, tau1)\\
sage: rho = F2.GroupHomomorphismByImages(MonBeta)\\
sage: wr = F2.WreathProduct(MonBeta)\\
sage: a\char`\_gens = [A\^{}wr.Embedding(j) for j in range(1, 13)]\\
sage: b\char`\_gens = [B\^{}wr.Embedding(j) for j in range(1, 13)]
}\end{small}

\medskip
\noindent Because \(\rho_{\gamma}(b) = 1\), \(f_{1}(1)\) and \(f_{1}(2)\) are set to \(1\), rather than \(b\) and \(b^{-1}\), respectively, prior to application of \(\rho_{\gamma*}\) in order to simplify the computations.  Specifically, the goal in this computation is the determination of the kernel of \(\proj_{\beta}: \Mon \beta \circ \gamma \rightarrow \Mon \beta\) in the case that \(\rho_{\gamma}(a) = (1\;\cdots\;m)\) and \(\rho_{\gamma}(b) = 1\).  But 
\(\rho_{\gamma*}\rtimes \id = (\rho_{\gamma*} \rtimes \id) \circ (a \mapsto a, b \mapsto 1)\),
so that their kernels are equal.

\smallskip
\begin{small}
\noindent\texttt{sage: f0 = a\char`\_gens[0] * a\char`\_gens[1]\^{}-1 * a\char`\_gens[3] * a\char`\_gens[5]\^{}-1\\
sage: f1 = a\char`\_gens[10] * a\char`\_gens[11]\^{}-1\\
sage: tau0, tau1 = wr.Embedding(13).Image().GeneratorsOfGroup()\\
sage: EMonBeta = libgap.Group(f0*tau0, f1*tau1)
}\end{small}

\medskip
\noindent Next, \(\MonExt \beta\) is computed using Lemma \ref{projbeta}.  As a result of a lack of efficient methods for finding generators in a wreath product not expressed as a permutation group, GAP is unable to find a minimal generating set for \texttt{KerProjBeta}, instead finding a set of \(61\) generators.  However, removing duplicates from this set reduces the generating set to \(15\) generators.

\smallskip
\begin{small}
\noindent\texttt{sage: phi = F2.GroupHomomorphismByImages(EMonBeta)\\
sage: KerProjBeta = phi.RestrictedMapping(rho.Kernel()).Image()\\
%sage: KerPiBeta = rho.Kernel().FactorGroupNC(phi.Kernel())
sage: gens = list(KerProjBeta.GeneratorsOfGroup().Unique())
}\end{small}

\medskip
\noindent Finally, from a set \(\{g_{i}\}_{i = 1}^{6}\) of six generators, all \(15\) unique generators can be shown to have the form \(\prod_{i = 1}^{6} g_{i}^{e_{i}}\), with \(e_{i} \in \{0, -1, 1\}\).  Manual inspection of \(\{g_{i}\}_{i = 1}^{6}\) shows them to be a minimal generating set.\footnote{The output of the final command was formatted to improve readability.}

\smallskip
\begin{small}
\noindent\texttt{sage: exponents\char`\_0\char`\_pm\char`\_1 = ( (wr.One(), gen, gen\^{}-1) for gen in gens[1:7] )\\
sage: gen\char`\_and\char`\_inv\char`\_prods = map(prod, itertools.product(*exponents\char`\_0\char`\_pm\char`\_1))\\
sage: set(gens).issubset(gen\char`\_and\char`\_inv\char`\_prods)\\
True\\
sage: gens[1:7]\\
\noindent[WreathProductElement(<id>, a\^{}5,  <id>, <id>, <id>, <id>,\\
\hphantom{[WreathProductElement(}<id>, <id>, <id>, <id>, <id>, a\^{}-5, ()),\\
\hphantom{[}WreathProductElement(a\^{}-5, <id>, <id>, <id>, <id>, <id>,\\
\hphantom{[WreathProductElement(}<id>, <id>, <id>, <id>, a\^{}5,  <id>, ()),\\
\hphantom{[}WreathProductElement(<id>, <id>, a\^{}-5, <id>, <id>, <id>,\\
\hphantom{[WreathProductElement(}<id>, <id>, <id>, a\^{}5,  <id>, <id>, ()),\\
\hphantom{[}WreathProductElement(<id>, <id>, <id>, a\^{}-5, <id>, <id>,\\
\hphantom{[WreathProductElement(}<id>, <id>, a\^{}5,  <id>, <id>, <id>, ()),\\
\hphantom{[}WreathProductElement(<id>, <id>, <id>, <id>, <id>, a\^{}-5,\\
\hphantom{[WreathProductElement(}a\^{}5,  <id>, <id>, <id>, <id>, <id>, ()),\\
\hphantom{[}WreathProductElement(<id>, <id>, <id>, <id>, a\^{}-5, <id>,\\
\hphantom{[WreathProductElement(}<id>, a\^{}5,  <id>, <id>, <id>, <id>, ())]
}\end{small}

It is worth noting that in the case \(\EMon \beta\) can be viewed as a subgroup of \(\mathbb{Z} \wr_{E_{\beta}} \Mon \beta\), as in this example where \(\EMon \beta\) is specialized under the assumption \(\sigma_{1}^{\gamma} = 1\) or in the case that the edges of \(\beta\) cross only one of \((-\infty, 0)\) and \((1, \infty)\), the use of Residue-Class-Wise Affine groups \cite{rcwa} provides an efficient method for determination of generators of \(\MonExt \beta\).
\end{document}